\newtheorem{theorem}{Theorem}
\newtheorem{lemma}{Lemma}
\theoremstyle{remark}
\newtheorem{remark}{Remark}
\newcommand\RR{\mathbb{R}}
\newcommand\CC{\mathbb{C}}
\newcommand\cC{\mathcal{C}}
\newcommand\cE{\mathcal{E}}
\newcommand\cH{\mathcal{H}}
\newcommand\cI{\mathcal{I}}
\newcommand\cQ{\mathcal{Q}}
\newcommand\cR{\mathcal{R}}
\newcommand\cN{\mathcal{N}}
\newcommand\ep{\varepsilon}
\newcommand\ii{\textnormal{i}}
\renewcommand{\Re}{\mathfrak{R}}
\title[Liouville theorem for 1D Gross-Pitaevskii equation]
{A new proof of a Liouville theorem for the one dimensional Gross-Pitaevskii equation}
\author{Micha{\l} Kowalczyk}
\address{Departamento de Ingenier\'{\i}a Matem\'atica and Centro
de Modelamiento Matem\'atico (UMI 2807 CNRS), Universidad de Chile, Casilla
170 Correo 3, Santiago, Chile.}
\email {kowalczy@dim.uchile.cl}
\author{Yvan Martel}
\address{Laboratoire de mathématiques de Versailles,
UVSQ, Université Paris-Saclay, CNRS, and Institut Universitaire de France,
45 avenue des États-Unis,
78035 Versailles Cedex, France}
\email{yvan.martel@uvsq.fr}
\thanks{M.K. was partially funded by Chilean research grants FONDECYT 1250156 and ANID project FB210005.
Part of this work was done while Y.M. was visiting CMM, Universidad de Chile}
\subjclass[2010]{35L71 (primary), 35B40, 37K40}
\begin{document}
\begin{abstract}
The asymptotic stability of the black and dark solitons of the one-dimensional 
Gross-Pitaevskii equation was proved by Béthuel, Gravejat and Smets~\cite{BGS1}
and Gravejat and Smets~\cite{GS15}, using a rigidity property 
in the vicinity of solitons.
We provide an alternate proof of the Liouville theorems in \cite{BGS1,GS15} using a factorization
identity for the linearized operator which trivializes the spectral analysis.
\end{abstract}

\maketitle

\section{Introduction}

We consider the one-dimensional Gross-Pitaevskii equation
\begin{equation*}
\ii\partial_t\psi+\partial_x^2 \psi+\psi (1-|\psi|^2)=0,\quad (t,x)\in \RR^2
\tag{GP}
\end{equation*}
for a function $\psi:(t,x)\in\RR^2\mapsto \psi(t,x)\in \CC$, with the condition
$\lim_{|x|\to\infty} |\psi(t,x)|=1$.
For a solution of (GP), the Hamiltonian is formally conserved
\[
E (\psi)=\frac12\int(\partial_x\psi)^2+\frac14\int(1-|\psi|^2)^2.
\]
Using the notation $\eta = 1-|\psi|^2$, it is natural to define the energy space
as follows
\[
\cE = \{ \psi \in \cC( \RR ; \CC ): \psi'\in L^2(\RR) \mbox{ and } \eta \in L^2(\RR)\}.
\]
We denote
\[
\|f\|:=\|f\|_{L^2},\quad \|f\|_\rho=\|\rho^\frac12 f\|,
\quad \|f\|_\cH=\left(\|f'\|^2 + \|f\|_\rho^2\right)^\frac12
,\quad \rho(x)=\sech(x).
\]
Following~\cite{GS15}, we equip the energy space $\cE$ with the distance
\[
d(\psi_1,\psi_2)= \left(\|\psi_1'-\psi_2'\|_\cH^2+\|\eta_1-\eta_2\|^2\right)^\frac12
\]
so that $(\cE,d)$ is a complete metric space. Recall from~\cite{Ge08,GS15,Zh01} that the Cauchy problem
is globally well-posed in $\cE$: for any $\psi_0\in \cE$, there exists
a unique global solution $\psi\in \cC(\RR,\cE)$ of (GP) with $\psi(0)=\psi_0$.

It is well-known that for any velocity $c\in (-\sqrt{2},\sqrt{2})$, there exists a nontrivial traveling wave solution $\psi(t,x)=U_c(t-cx)$ to this problem where $U_c(x)$ is the solution of
\[
-\ii cU_c'+U_c''+U_c(1-|U_c|^2)=0 \quad \hbox{on $\RR$}
\]
explicitly given by the formula
\[
U_c=R_c+\ii I_c,\quad 
R_c(x)=\sqrt{\frac{2-c^2}{2}} \tanh\left(\frac{\sqrt{2-c^2}}{2} x\right),\quad
I_c= \frac{c}{\sqrt{2}}.
\]
In case $c\neq 0$, the traveling wave solution is called dark soliton and in case $c=0$, it is called black soliton. 
The orbital stability of both kinds of solitons was proved in a satisfactory
functional setting; we refer to \cite[Theorem 1]{BGS1} and \cite[Theorem 1]{GS15}.

We are interested in the question of asymptotic stability of the family of traveling waves in the framework developed 
by Béthuel, Gravejat, Smets~\cite{BGS1} and Gravejat, Smets~\cite{GS15}.
From those articles, their approach relies on the following Liouville theorem for smooth solutions of (GP) that are close 
to a traveling wave and uniformly localized in space.

\begin{theorem}[\cite{BGS1,GS15}]\label{th:1}
Let $c_0\in (-\sqrt{2}, \sqrt{2})$. Let $M>0$ and $\gamma>0$.
There exists $\alpha_0>0$ such that if
a solution $\psi\in \cC(\RR;\cE)$ of \textnormal{(GP)}
satisfies $\psi\in\cC^\infty(\RR\times\RR)$, $d(\psi(0),U_{c_0})\leq \alpha_0$ and
\[
\inf_{a\in \RR} \biggl\{ |\eta(t, x+a)|
+\sum_{k=1,2,3} |\partial_x^k \psi(t, x+a)|\biggr\}\leq M e^{-\gamma|x|}
\]
then there exist $c_1\in (-\sqrt{2},\sqrt{2})$ and $a_1\in\RR$ such that
$\psi(t,x)=U_{c_1}(x-c_1 t+a_1)$ on $\RR\times\RR$. 
\end{theorem}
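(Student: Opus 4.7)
\emph{Modulation.} Using the orbital stability from \cite{BGS1,GS15}, one fixes $\alpha_0$ small so that $\psi(t)$ stays in a tubular neighborhood of $\{U_c(\cdot-a):c\in I_0,\,a\in\RR\}$ for all $t\in\RR$, where $I_0$ is a small interval around $c_0$. A standard implicit function theorem argument then produces $\cC^1$ parameters $a,c:\RR\to\RR$ and a remainder
\[
w(t,y)=\psi(t,y+a(t))-U_{c(t)}(y)
\]
satisfying two orthogonality conditions, one pinning translation via $\partial_y U_{c(t)}$, one pinning the velocity via $\partial_c U_{c(t)}$. The modulation equations give $|\dot a(t)-c(t)|+|\dot c(t)|\lesssim\|w(t)\|_\cH^2$. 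The uniform exponential localization hypothesis on $\psi$ transfers to $w$: the quantities $w,\partial_y w,\partial_y^2 w$ and the density $|w|^2+2\Re(\overline{U_{c(t)}}\,w)$ all decay as $e^{-\gamma|y|}$ uniformly in $t$.

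\emph{Factorization.} Splitting $w=u+\ii v$, the linearization of (GP) around $U_{c(t)}$ is a matrix operator $\mathcal{L}_c$ acting on $(u,v)$. The central new ingredient is an identity of the form
\[
\langle \mathcal{L}_c w, w\rangle = \|A_c w\|^2 + \langle K_c w,w\rangle,
\]
where $A_c$ is a first-order differential operator built by a Darboux/supersymmetric construction from the explicit zero modes $\partial_y U_c$ and $\partial_c U_c$, and $K_c$ is a finite-rank correction carried by the kernel. One designs $A_c$ so that $\ker A_c$ coincides with the span of the modulation directions; under the two orthogonality conditions, $\|A_c w\|^2$ then controls $w$ coercively in the weighted norm $\|\cdot\|_\rho$. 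This makes the positivity of $\mathcal{L}_c$ on the constrained space transparent, bypassing the delicate spectral computations of \cite{BGS1,GS15}.

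\emph{Virial and conclusion.} With a smooth, bounded, monotone weight $\phi$ of $\sech$-type, consider the localized momentum
\[
\cI(t) = \Im\int \phi(y)\,\overline{w}(t,y)\,\partial_y w(t,y)\,dy.
\]
Using the equation for $w$, the bounds on $\dot a-c$ and $\dot c$, and the factorization, a careful computation should yield
\[
\frac{d}{dt}\cI(t) \;\geq\; \kappa\,\|A_c w(t)\|_\rho^2 \;-\; o\bigl(\|w(t)\|_\rho^2\bigr),
\]
the error being absorbable by the exponential decay of $w$. Since $\cI(t)$ is uniformly bounded by the localization, integrating over $t\in(-T,T)$ and letting $T\to\infty$ gives $\int_\RR \|A_c w(t)\|_\rho^2\,dt<\infty$; a standard time-translation compactness argument combined with the two orthogonality conditions then forces $w\equiv 0$ for all $t\in\RR$. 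Substituting $\psi(t,x)=U_{c(t)}(x-a(t))$ back into (GP) yields $\dot c\equiv 0$ and $\dot a\equiv c_1$, concluding the proof with $c_1=c(0)$ and $a_1=a(0)$.

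\emph{Main obstacle.} The delicate point is constructing the factorization $\mathcal{L}_c=A_c^\star A_c+K_c$ in a form compatible with the virial computation and uniform in $c\in(-\sqrt{2},\sqrt{2})$. The linearization is Hamiltonian rather than self-adjoint for the standard $L^2$ pairing, so the factorization must be read in the appropriate symplectic sense. The black soliton case $c=0$ is further degenerate since $U_0$ vanishes at the origin and hydrodynamic variables become singular there; producing a single algebraic identity that stays regular through $c=0$ and whose kernel matches exactly the two-dimensional symmetry group is the central technical difficulty on which the whole scheme rests.
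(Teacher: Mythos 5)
Your proposal reproduces the general architecture (modulation, a factorization, a virial argument) but it leaves out precisely the ingredient that constitutes the paper's proof, and several of the steps you do sketch would not go through as stated. The paper's key lemma is an \emph{explicit operator identity}: with $S_c=\partial_x+\sqrt2 R_c$ one has $L_+=S_c^\star S_c$ and $S_c(L_--c^2)S_c^\star=(\partial_x^2-\beta^2)\partial_x^2$. This is used to \emph{conjugate the evolution}: setting $v_1=S_c\ep_1$ and $\ep_2=S_c^\star w_2$ (the inversion of $S_c^\star$ requiring the orthogonality $\int \ep_2 Q_c=0$), the system becomes potential-free, $\partial_t w_1=(\partial_x^2-\beta^2)\partial_x^2w_2+F_2$, $\partial_t w_2=-w_1+2c\partial_x w_2+F_1$, and the virial functional $\cI=\int x(\partial_xw_2)(w_1-c\partial_xw_2)$ on the \emph{transformed} variables produces a quadratic form $\cQ$ that is a manifest sum of squares — no coercivity analysis of any constrained quadratic form is needed. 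Your version instead posits a decomposition $\langle\mathcal L_cw,w\rangle=\|A_cw\|^2+\langle K_cw,w\rangle$ of the \emph{energy quadratic form} and then runs a localized-momentum virial on the original variable $w$; but the time derivative of $\Im\int\phi\bar w\partial_yw$ does not produce $\langle\mathcal L_cw,w\rangle$ (it produces a commutator quadratic form with the potential terms still present), so the postulated factorization, even if constructed, would not plug into your virial as claimed. You yourself flag the construction of $A_c$ as the ``main obstacle'': that obstacle is the theorem, and it is not resolved in your write-up.

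Two further concrete gaps. First, you modulate only in translation and speed; (GP) also has the phase invariance $\psi\mapsto e^{\ii\theta}\psi$, and the paper needs a third parameter $\theta(t)$ and a third orthogonality condition ($\int R_cQ_c\ep_2=0$, in addition to $\int Q_c\ep_1=\int Q_c\ep_2=0$). Without it the decomposition is not well posed near the soliton family. Second, your claimed modulation estimate $|\dot a-c|+|\dot c|\lesssim\|w\|_\cH^2$ is too strong: the paper only obtains $|\dot a-c|^2+|\dot c|+|\dot\theta|^2\lesssim\int\rho^\gamma|\ep|^2$, i.e.\ $\dot a-c$ and $\dot\theta$ are merely \emph{linear} in the perturbation. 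This is not a cosmetic point: the linear-in-$\cN$ source terms such as $\dot\theta R_c$ and $(\dot a-c)\partial_x\ep_2$ must be handled by specific algebraic cancellations (e.g.\ $S_c^\star 1=\sqrt2R_c$ forces the corresponding contribution to $\partial_xF_1$ to vanish) before the error term $\cR$ in the virial identity can be shown to be $O(\cN^3)$. Assuming quadratic smallness of $\dot a-c$ hides exactly the terms that require work.
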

In this Note, we provide an alternate proof of Theorem~\ref{th:1},
inspired by recent articles on the asymptotic stability of solitons for
the nonlinear Klein-Gordon equation~\cite{KoMM} and the nonlinear Schrödinger equation~\cite{Ma22}.
As in those articles, we introduce a factorization identity of the linearized operator
around a traveling wave which leads to a transformed problem
with trivial spectral properties. 
We refer to Lemma~\ref{le:02} for the identity and to Remark~\ref{rk:2}
for heuristics.
The identity obtained in Lemma~\ref{le:02} is certainly related to the integrability of the model.
However, as shown in~\cite{Ma22,Ri24} for nonlinear Schrödinger models, 
it is possible to extend the arguments to less specific situations.
Therefore, we hope that the factorization will be useful elsewhere. 
Finally, we expect that our approach will enable a direct 
proof of the asymptotic stability for any $c\in (-\sqrt{2},\sqrt{2})$, as in 
\cite{KoMM,KoMa,Ma22}.

\section{Stability and modulation}

For $c\in (-\sqrt{2},\sqrt{2})$, we set
$\beta=\sqrt{2-c^2}>0$ and we define
\[
Q_c=1-|U_c|^2 =\sqrt{2} R_c'=\frac{\beta^2}2 \sech^2\left(\frac{\beta x}2\right),
\quad Q_c''-\beta^2 Q_c +3Q_c^2=0.
\]

%In particular, we have, for any $k\geq 0$, for all $x\in \RR$,
%$
%|Q_c^{(k)}(x)|\lesssim \rho^\beta.
%$
%We also check that $R_c$ satisfies $R_c'' =- Q_c R_c$ and $2R_c^2 = 2-c^2-2Q_c$.

\begin{lemma}\label{le:01}
Under the assumptions of Theorem~\ref{th:1}, for $\alpha_0>0$ sufficiently small, there exist
 functions $a\in \cC^\infty(\RR,\RR)$, $c\in \cC^\infty(\RR,(-\sqrt2,\sqrt2))$, $\theta\in\cC^\infty(\RR,\RR)$
such that
\[
\psi(t,x)= e^{\ii\theta(t)}\left(U_{c(t)}(x-a(t))+\ep(t,x-a(t))\right)
\]
where $\ep\in\cC^\infty(\RR\times\RR)$ satisfies the orthogonality conditions
\[
\int \Re(U'_c \bar \ep)=\int \Re (\ii U'_c \bar \ep)=\int \Re(\ii R_cQ_c \bar \ep)=0
\]
and the equation
\[
\ii \partial_t\ep +\partial_x^2\ep -\ii c\partial_x \ep+q=\Omega
\]
with
\begin{align*}
q&=(1-|U_c+\ep|^2)(U_c+\ep)-(1-|U_c|^2) U_c,\\
\Omega&=\ii (\dot a-c) (U_c'+\partial_x\ep)-\ii \dot c\partial_c U_c + \dot \theta(U_c+\ep).
\end{align*}
Moreover, setting $\zeta(t,x)=Q_c(x)-\eta(t,x+a(t))$,
there exists $\gamma\in (0,\beta/2)$ such that, for all $t\in\RR$,
\begin{align*}
&\|\ep(t)\|_\cH+\|\zeta(t)\|+|c(t)-c_0|\lesssim \alpha_0,\\
&|\dot a(t)- c(t) |^2+|\dot c(t)|+|\dot\theta(t)|^2 \lesssim \int \rho^\gamma|\ep(t)|^2,\\
&|\zeta(t)|+\sum_{k=1,2,3} |\partial_x^k \ep(t)| \lesssim \rho^\gamma.
\end{align*}
\end{lemma}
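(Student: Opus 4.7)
The plan is a standard modulation argument augmented by the uniform localization hypothesis, carried out in four steps.

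\emph{Modulation via the implicit function theorem.} The orbital stability results of~\cite{BGS1,GS15} first give a continuous decomposition $\psi(t) = e^{\ii\tilde\theta(t)}(U_{\tilde c(t)}(\cdot - \tilde a(t)) + \tilde\ep(t, \cdot - \tilde a(t)))$ with $\|\tilde\ep\|_\cH + |\tilde c - c_0|$ uniformly small. To refine this into the decomposition claimed in the lemma, I apply the implicit function theorem to the map $(\tau, \sigma, \phi) \mapsto (F_1, F_2, F_3)$, where the $F_j$ are the three candidate orthogonality integrals computed from $w = e^{-\ii\phi}\psi(\cdot + \tau) - U_\sigma$. Since $I_c$ is constant one has $U'_c = R'_c$ real, and the Jacobian at the reference point is a small perturbation of a matrix whose nonzero entries are of the form $\|R'_{c_0}\|^2$ and $\|R_{c_0} Q_{c_0}\|^2$, hence invertible for $\alpha_0$ small. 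This yields $\cC^\infty$ functions $a, c, \theta$ (smoothness inherited from $\psi \in \cC^\infty(\RR\times\RR)$) satisfying the three orthogonalities, and the first line of bounds follows at once.

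\emph{Equation for $\ep$ and modulation equations.} Substituting the ansatz into (GP), using the profile equation for $U_c$, and passing to the moving frame $y = x - a(t)$ yield the stated equation with $\Omega$ absorbing the modulation time-derivatives. To bound $\dot a - c, \dot c, \dot\theta$, I differentiate each orthogonality in $t$, substitute $\partial_t \ep = \ii\partial_x^2\ep + c\partial_x\ep + \ii q - \ii\Omega$, and integrate by parts; this produces a $3\times 3$ near-diagonal linear system for $(\dot a - c, \dot c, \dot\theta)$, invertible by the same reasoning as in Step~1, whose right-hand side consists of terms $\int F \ep$ and $\int F q$ localized by $R_c Q_c$ or $R'_c$. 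The asymmetry $|\dot c| \lesssim \int \rho^\gamma |\ep|^2$ versus the square-root bounds on $|\dot a - c|$ and $|\dot\theta|$ reflects the specific choice of the third direction $\ii R_c Q_c$, encoding velocity modulation: the $\ep$-linear coupling with $\partial_t\ep$ in the $\dot c$ equation cancels at leading order, so only the quadratic $q$-contribution survives. For the other two equations, the surviving linear terms are controlled by Cauchy-Schwarz against the weight $\rho^{\gamma/2}$, yielding the squared bounds.

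\emph{Pointwise decay.} This step I expect to be the most delicate. The hypothesis of Theorem~\ref{th:1} produces, for each $t$, a center $a^\star(t)$ around which $|\eta|$ and $|\partial_x^k \psi|$ decay like $e^{-\gamma|\cdot|}$. The key observation is that orbital stability together with the three orthogonality conditions determines the modulation center $a(t)$ uniquely modulo an $O(1)$ shift: two uniformly separated centers of concentration are incompatible with the near-soliton profile forced by $\alpha_0$ small, so $|a(t) - a^\star(t)| \lesssim 1$ uniformly in $t$. Replacing $a^\star(t)$ by $a(t)$ only enlarges the multiplicative constant in the exponential weight, yielding the pointwise bounds on $\partial_x^k \ep$ and, using $\zeta = Q_c - \eta(\cdot + a)$ together with the decay of $Q_c$, on $\zeta$, for some $\gamma \in (0, \beta/2)$. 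The main obstacle is precisely the uniform bound on $|a(t) - a^\star(t)|$, which links two notions of center (one from modulation via $\cH$-orthogonality, the other from pointwise localization); making this rigorous is what tightens the hypothesis of the theorem into the concrete estimates of the lemma.
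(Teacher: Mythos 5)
The paper offers no proof of this lemma: it explicitly defers to \cite{BGS1,GS15}, recording only that the quadratic bound on $\dot c$ must be extracted from the arguments there. Your outline is precisely the standard modulation scheme that those references implement, and its three steps are the right ones; in particular your treatment of the pointwise decay --- identifying the comparison of the modulation center $a(t)$ with the localization center $a^\star(t)$ as the delicate point, and ruling out $|a(t)-a^\star(t)|\gg 1$ by testing the exponential smallness of $\eta$ near $a(t)$ against the lower bound on $\|Q_{c}\|_{L^2(|x-a|\le 1)}$ forced by orbital stability --- is correct and is exactly how one passes from the hypothesis of Theorem~\ref{th:1} to the weighted bounds $|\zeta|+\sum_k|\partial_x^k\ep|\lesssim\rho^\gamma$. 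One correction to your Step 2, which concerns the single point the paper singles out: the quadratic bound on $\dot c$ does not come from the third direction $\ii R_cQ_c$ (that direction pairs with $\ii U_c$ and produces the equation determining $\dot\theta$, which is only \emph{linear} in $\ep$, whence $|\dot\theta|^2\lesssim\int\rho^\gamma|\ep|^2$). It comes from differentiating the second orthogonality $\int Q_c\ep_2=0$: there the $O(1)$ linear terms vanish because $L_+Q_c=0$, $S_cQ_c=0$ and $\int Q_cR_c=0$, while $\int Q_c=2\beta\neq 0$ isolates $\dot c$, so the right-hand side reduces to $\int Q_cN_1$ and $(\dot a-c)\int Q_c\partial_x\ep_2$, both genuinely quadratic. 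With that attribution fixed, your sketch is a faithful (if necessarily abbreviated) account of the proof the paper omits.
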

We shall not reproduce here the proof of Lemma~\ref{le:01}. See
\cite{BGS1,GS15}.
The bound on the time derivative of the parameters $a$, $c$ and $\theta$
is deduced from the proofs in~\cite{BGS1,GS15}. Note that only $|\dot c|$ is quadratic in
$\ep$.

We denote $\ep=\ep_1+\ii \ep_2$, $q=q_1+\ii q_2$ and 
$\Omega=\Omega_1+\ii \Omega_2$. 
The orthogonality conditions become
\begin{equation*}
\int Q_c \ep_1= \int Q_c \ep_2 = \int R_c Q_c \ep_2=0.
\end{equation*}
We observe from its definition $\zeta=Q_c - (1-|U_c+\ep|^2)$ that
\[
\zeta= Q_c -( 1 - |U_c|^2 - 2\Re(U_c \ep)-|\ep|^2)
= 2R_c\ep_1 + c \sqrt{2} \ep_2 +|\ep|^2.
\]
It holds $q_1 =Q_c \ep_1-R_c\zeta -\zeta \ep_1$ and
$q_2 =Q_c \ep_2-\frac c{\sqrt{2}} \zeta-\zeta \ep_2$. Thus
\begin{align*}
\partial_t\ep_1&=-\partial_x^2 \ep_2+c\partial_x\ep_1-Q_c \ep_2+\frac{c}{\sqrt{2}}\zeta+\zeta \ep_2+\Omega_2\\
\partial_t\ep_2&=\partial_x^2 \ep_1+c\partial_x\ep_2+Q_c \ep_1-R_c\zeta -\zeta\ep_1 - \Omega_1
\end{align*}
Define
\begin{gather*}
 L_+ =-\partial^2_x +\beta^2 -3Q_c ,\quad
L_- =-\partial_x^2 +c^2 -Q_c, \\
 S_c =\partial_x+\sqrt{2} R_c = Q_c \cdot \partial_x \cdot Q_c^{-1}, 
\quad S_c^\star=-\partial_x+\sqrt{2} R_c
 =- Q_c^{-1}\cdot \partial_x \cdot Q_c.
\end{gather*}
Using $2R_c^2=\beta^2-2Q_c$, we obtain
\[
\Bigg\{\begin{aligned}
\partial_t\ep_1&=L_-\ep_2+c S_c\ep_1+N_2+\Omega_2\\
\partial_t\ep_2&=-L_+\ep_1-cS_c^\star\ep_2-N_1-\Omega_1
\end{aligned}
\]
where
\begin{align*}
&N_1=-R_c|\ep|^2-2 R_c \ep_1^2-\sqrt{2} c\ep_1\ep_2 -\ep_1|\ep|^2\\
&N_2=\frac{c}{\sqrt{2}} |\ep|^2+2 R_c\ep_1\ep_2+\sqrt{2} c \ep_2^2+\ep_2|\ep|^2
\end{align*}
and
\begin{align*}
&\Omega_1=-(\dot a -c)\partial_x\ep_2+\frac{\dot c}{\sqrt{2}}
+\dot\theta(R_c+\ep_2) \\
& 
\Omega_2 = (\dot a-c)(R_c'+\partial_x\ep_1)
-\dot c \partial_c R_c+\dot\theta\left(\frac{c}{\sqrt{2}}+\ep_2\right).
\end{align*}

\section{The transformed problem}
\subsection{Factorization}
We state the key lemma of this Note.
\begin{lemma}\label{le:02}
For any $c\in (-\sqrt{2},\sqrt{2})$, it holds
$
L_+= S_c^\star S_c$ and $S_c (L_- - c^2 ) S_c^\star = (\partial^2_x-\beta^2) \partial_x^2$.
\end{lemma}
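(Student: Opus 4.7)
The plan is to establish both identities by direct expansion of the differential operators acting on a smooth test function $f$, using only the algebraic identities $\sqrt{2}R_c' = Q_c$ and $2R_c^2 = \beta^2 - 2Q_c$, the ODE $Q_c'' - \beta^2 Q_c + 3Q_c^2 = 0$, and its consequence $Q_c' = -\sqrt{2}R_c Q_c$, which is equivalent to the stated conjugation $S_c = Q_c \partial_x Q_c^{-1}$ via $(\ln Q_c)' = -\sqrt{2}R_c$.

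For the factorization $L_+ = S_c^\star S_c$, I would compute $S_c f = f' + \sqrt{2}R_c f$ and then $S_c^\star(S_c f) = -(S_c f)' + \sqrt{2}R_c (S_c f)$. The cross terms $\pm \sqrt{2}R_c f'$ cancel, leaving $-f'' - \sqrt{2}R_c' f + 2R_c^2 f$. Substituting the two algebraic relations gives $-f'' + \beta^2 f - 3Q_c f = L_+ f$ in a few lines.

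The second identity requires composing three operators. I would proceed stage by stage: first $g := S_c^\star f = -f' + \sqrt{2}R_c f$, then $h := (L_- - c^2)g = -g'' - Q_c g$ (third order in $f$), and finally $S_c h = h' + \sqrt{2}R_c h$ (fourth order). Expanding and collecting by derivative order, the $f^{(4)}$ coefficient is $1$; the two $f'''$ contributions cancel by symmetry; the $f''$ coefficient equals $-(2Q_c + 2R_c^2) = -\beta^2$ by $2R_c^2 = \beta^2 - 2Q_c$; the $f'$ coefficient is proportional to $Q_c' + \sqrt{2}R_c Q_c$ and vanishes; and the $f$ coefficient simplifies, after substituting $Q_c' = -\sqrt{2}R_c Q_c$ and $2R_c^2 = \beta^2 - 2Q_c$, to $-(Q_c'' - \beta^2 Q_c + 3Q_c^2)$, which vanishes by the ODE. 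What remains is exactly $f^{(4)} - \beta^2 f'' = (\partial_x^2 - \beta^2)\partial_x^2 f$.

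The main obstacle is the bookkeeping in the second identity: numerous terms of the form $R_c^{(i)} Q_c^{(j)} f^{(k)}$ arise from successive applications of the product rule, and one must carefully track which of them cancel at each derivative order. A more conceptual route would exploit $S_c^2 = Q_c \partial_x^2 Q_c^{-1}$ and $(S_c^\star)^2 = Q_c^{-1} \partial_x^2 Q_c$ together with the preliminary identity $L_- - c^2 = S_c S_c^\star - \beta^2$ (provable by the same type of expansion as in the first identity). However, since $L_- - c^2$ does not fit into a clean conjugation pattern between $S_c$ and $S_c^\star$, the term-by-term expansion outlined above appears to be the most transparent verification.
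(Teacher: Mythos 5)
Your proof is correct and is essentially the same direct algebraic verification as the paper's: both expand the compositions using the identities $\sqrt{2}R_c'=Q_c$, $2R_c^2=\beta^2-2Q_c$ and $R_c''=-\sqrt{2}R_c'R_c$ (the paper organizes the second identity slightly more efficiently by first factoring $(L_--c^2)S_c^\star=A\partial_x$ with $A=\partial_x^2-\sqrt{2}R_c\partial_x-\sqrt{2}R_c'$, which kills the zeroth-order terms before composing with $S_c$, whereas you carry them along and invoke the ODE for $Q_c$ at the end). All of your stated coefficient computations check out.
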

\begin{remark}\label{rk:2}
To illustrate heuristically the interest of the above identities, observe that for $c=0$ (to simplify the 
exposition), setting $w_1=S_0 \ep_1$ and $\ep_2 = S_0^\star w_2$, one obtains
\[
\Bigg\{\begin{aligned}
\partial_t\ep_1&=L_-\ep_2 \\
\partial_t\ep_2&=-L_+\ep_1 
\end{aligned}
\quad \iff\quad
\Bigg\{\begin{aligned}
\partial_t w_1&= (\partial_{x}^2-2 )\partial_{x}^2 w_2\\
\partial_t w_2& =-w_1
\end{aligned}
\]
Being without any potential, the system for $(w_1,w_2)$ is simpler;
see Lemma~\ref{le:07}.
\end{remark}
\begin{proof}
First, the identity $L_+= S_c^\star S_c$ is standard and related to 
the fact that $L_+Q_c=0$.
Second, we recall that
$L_- - c^2=-\partial^2_x-\sqrt{2} R'_c$ and so, using $R_c'' = -\sqrt{2}R_c'R_c$,
\begin{align*}
(L_- - c^2) S_c^\star&=(-\partial_x^2-\sqrt{2} R'_c)(-\partial_x+\sqrt{2} R_c)\\
&=\partial_x^3-\sqrt{2}R_c\partial_x^2-2\sqrt 2 R'_c\partial_x-\sqrt 2 R''_c+\sqrt 2 R'_c\partial_x-2 R_c' R_c
=A \partial_x
\end{align*}
where $A=\partial_x^2-\sqrt 2 R_c\partial_x-\sqrt 2R_c'$.
Thus,
\[
S_c (L_- -c^2) S_c^\star=  S_c A\partial_x.
\]
We calculate, using
$2R_c^2=\beta^2-2\sqrt{2}R_c'$ and $R_c'' = -\sqrt{2}R_c'R_c$,
\begin{align*}
S_cA&=(\partial_x+\sqrt 2 R_c)(\partial_x^2-\sqrt 2 R_c\partial_x-\sqrt 2R_c')\\
&=\partial^3_x-2 \sqrt{2} R_c' \partial_x-\sqrt 2 R_c''
-2 R_c^2 \partial_x-2 R_c R_c'=(\partial^2_x-\beta^2) \partial_x
\end{align*}
which finishes the proof of the second identity.
\end{proof}

In view of the definition of $\ep_2$ in Remark~\ref{rk:2}, we will need to invert
the operator $S_c^\star$.

\begin{lemma}\label{le:03}
For any $f\in W^{1,\infty}(\RR)$ such that $\int f Q_c=0$, 
the function $g\in W^{2,\infty}(\RR)$ defined by
\[
g(x) = \frac 1{Q_c(x)} \int_x^\infty f Q_c = - \frac 1{Q_c(x)} \int_{-\infty}^x f Q_c
\]
satisfies $S_c^\star g=f$. Moveover,
\[
|g(x)|+|g'(x)|\lesssim \sup_{|y|\geq |x|} |f(y)|,
\quad|g'(x)|+|g''(x)|\lesssim \sup_{|y|\geq |x|} |f'(y)| + Q_c(x) |f(x)|
\]
and, for any $0<\kappa<2\beta$,
\[
\| \rho^\kappa g\| + \|\rho^\kappa g'\| 
\lesssim \|\rho^\frac\kappa2 f\|.
\]
\end{lemma}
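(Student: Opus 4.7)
The plan is to verify $S_c^\star g = f$ from the factorization $S_c^\star = -Q_c^{-1}\partial_x\circ Q_c$, then extract both the pointwise and the weighted $L^2$ estimates from the two explicit integral representations of $g$. Throughout I treat $x \geq 0$ using the first formula and invoke the symmetric argument for $x \leq 0$ with the second. The factorization makes the ODE immediate: differentiating $Q_c g = \int_x^\infty f Q_c$ gives $(Q_c g)' = -f Q_c$, i.e.\ $S_c^\star g = f$; the two expressions for $g$ coincide precisely because $\int f Q_c = 0$.

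For the crude bound $|g|+|g'|\lesssim \sup_{|y|\geq|x|}|f(y)|$, I estimate $|Q_c(x) g(x)| \leq \sup_{y \geq x}|f(y)| \cdot \int_x^\infty Q_c$ and then observe that $\int_x^\infty Q_c = \beta(1 - \tanh(\beta x/2))$ and $Q_c(x) = (\beta^2/2)\sech^2(\beta x/2)$ share the same exponential decay at $+\infty$, so their ratio is uniformly bounded on $[0,\infty)$. Combined with the ODE $g' = \sqrt{2}R_c g - f$ and $|R_c| \leq \beta/\sqrt{2}$, this yields the bound on $g'$.

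The refined bound $|g'|+|g''|\lesssim \sup|f'| + Q_c|f|$ is the main technical point. I integrate by parts in $\int_x^\infty f Q_c$ using $Q_c = \sqrt{2}R_c'$ together with the antiderivative $R_c - \beta/\sqrt{2}$, which vanishes at $+\infty$. Setting $P(x) = \sqrt{2}(\beta/\sqrt{2} - R_c(x))$, this produces $Q_c g = P f + \int_x^\infty f' P$; direct comparison shows that on $[0,\infty)$ both $P(x)$ and $\int_x^\infty P$ are bounded by $Q_c(x)$ up to a constant. Substituting into $g' = \sqrt{2}R_c g - f$ reveals the crucial algebraic cancellation $\beta\tanh(\beta x/2)\,P(x)/Q_c(x) - 1 = -e^{-\beta x}$, so the explicit $f$-contribution in $g'$ is $-e^{-\beta x}f(x)$, which is comparable to $Q_c(x)|f(x)|$. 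The bound on $g''$ then follows by differentiating, $g'' = Q_c g + \sqrt{2}R_c g' - f'$, and using the estimates on $Q_c g$ and $g'$. Identifying this cancellation is the main obstacle: the direct estimate $|g'| \leq \beta|g| + |f|$ only produces $|f|$ in place of the desired $Q_c|f|$.

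For the weighted bound, I apply Cauchy-Schwarz to the first representation: for $x \geq 0$, $|Q_c(x) g(x)|^2 \leq \left(\int_x^\infty Q_c^2 \rho^{-\kappa}\right)\left(\int_x^\infty f^2 \rho^\kappa\right)$. The hypothesis $\kappa < 2\beta$ is exactly what makes $Q_c^2 \rho^{-\kappa}$ integrable at $+\infty$, and by exponential decay $\int_x^\infty Q_c^2 \rho^{-\kappa} \lesssim Q_c^2(x) \rho^{-\kappa}(x)$. Dividing by $Q_c^2(x)$, multiplying by $\rho^{2\kappa}(x)$, integrating in $x$, and applying Fubini produces $\int_0^\infty \rho^{2\kappa} g^2 \lesssim \|\rho^{\kappa/2} f\|^2$. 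The $x \leq 0$ contribution is symmetric, and the bound on $\|\rho^\kappa g'\|$ follows from the ODE together with $|R_c| \lesssim 1$ and $\rho \leq 1$.
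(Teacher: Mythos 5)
Your proposal is correct and follows essentially the same route as the paper's proof: the ODE is verified via the factorization $S_c^\star=-Q_c^{-1}\cdot\partial_x\cdot Q_c$, the refined pointwise bound comes from the same integration by parts (your coefficient $\sqrt{2}R_cP/Q_c-1=-e^{-\beta x}$ is exactly the paper's $k_c(x)=-e^{-\beta x}$), and the weighted estimate uses the same Cauchy--Schwarz argument with the integrability of $Q_c^2\rho^{-\kappa}$ for $\kappa<2\beta$. No substantive differences.
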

\begin{remark}\label{rk:3}
If $S_c^\star g = 0$, then $Q_c g=C$, where $C$ is a constant.
Thus, for a given $f\in W^{1,\infty}(\RR)$, the function $g$ defined in Lemma~\ref{le:03} is the only bounded solution of $S_c^\star g = f$.
\end{remark}
\begin{proof}
Using $S_c^\star=-Q_c^{-1}\cdot \partial_x\cdot Q_c$, we check that $S_c^\star g=f$.
Moreover, 
the estimate $|g(x)|+|g'(x)|\lesssim \sup_{|y|\geq |x|} |f(y)|$ follows easily from the definition of $g$ and
\[
g' = -\frac{Q_c'}{Q_c^2}\int_x^\infty fQ_c-f.
\]
Integrating by parts, we also get for $x>0$,
\[
g'(x)= -\frac{Q_c'}{Q_c^2}\int_x^{\infty} f'(y) \int_y^\infty Q_c dy
+k_c(x) f(x)
\quad \mbox{where}\quad
k_c(x)= -\frac{Q_c'}{Q_c^2}\int_x^{\infty} Q_c -1.
\]
Replacing $Q_c$ by its explicit expression, we obtain
$k_c(x) = -e^{-\beta x}$. Proceeding similarly for $x<0$, we obtain the pointwise estimate on $g'$.
To obtain the pointwise estimate on $g''$, it suffices to differentiate the expression for $g'$
and to use similar estimates.

Now, we observe that for $0\lesssim \kappa<2\beta$ and for all $x>0$,
\begin{multline*}
\rho^{2\kappa} g^2
+\rho^{2\kappa} (g')^2\lesssim 
\frac{\rho^{2\kappa}}{Q_c^2}\left(\int_x^\infty f Q_c\right)^2
+\rho^{2\kappa}f^2\\
\lesssim \frac{\rho^{2\kappa}}{Q_c^2}\left(\int_x^\infty \rho^{-\kappa}Q_c^2\right) 
\left(\int \rho^{\kappa} f^2\right) +\rho^{2\kappa}f^2
\lesssim \rho^\kappa\left(\int \rho^\kappa f^2\right)+\rho^{2\kappa}f^2.
\end{multline*}
This, together with the analogue estimate for $x<0$, 
implies $\| \rho^\kappa g\| + \|\rho^\kappa g'\| 
\lesssim \|\rho^\frac\kappa2 f\|$ by integrating in $x$.
\end{proof}

\subsection{First change of variables}\label{S:3.2}
We set $v_1=S_c \ep_1$. Then, we use Lemma~\ref{le:03} and the orthogonality $\int \ep_2 Q_c=0$ to 
define a smooth and bounded function $w_2$ such that
$S_c^\star w_2 = \ep_2$.
We determine the equations for $v_1$ and $w_2$. First,
\[
\partial_t v_1=S_c L_- S_c^\star w_2+cS_c v_1+ P_2+\Theta_2,
\quad
P_2=S_c N_2+\sqrt{2} \dot c \ep_1 \partial_c R_c,
\quad \Theta_2 = S_c \Omega_2.
\]
From $S_c^\star w_2=\ep_2$ and $L_+=S_c^\star S_c$, we find
\begin{align*}
\partial_t S_c^\star w_2= S^\star_c \partial_t w_2+ \sqrt{2}\dot c w_2\partial_cR_c
&=-L_+\ep_1-cS^\star\ep_2-N_1-\Omega_1\\
&=-S_c^\star (v_1+c S_c^\star w_2)-N_1-\Omega_1,
\end{align*}
hence, rearranging
\[
S_c^\star\left(\partial_t w_2+v_1+c S_c^\star w_2\right)=-W -N_1-\Omega_1,
\quad W=\sqrt{2} \dot c w_2 \partial_c R_c.
\]
Since the left hand side of the above identity is orthogonal to $Q_c$, 
we have
\[
-W-N_1-\Omega_1
=-W^\perp-N_1^\perp-\Omega_1^\perp,
\]
where $f^\perp=f-\frac{Q_c}{\|Q_c\|^2}\int f Q_c $.
Moreover, by Lemma~\ref{le:03} we can define 
$P_1$ and $\Theta_1$ such that
\[
S_c^\star P_1=-W^\perp-N_1^\perp,\quad 
S_c^\star \Theta_1 = -\Omega_1^\perp.
\]
We obtain
\begin{align*}
\partial_t v_1&=S_c L_- S_c^\star w_2+cS_c v_1+ P_2+\Theta_2\\
\partial_t w_2&=-v_1-c S_c^\star w_2+P_1+\Theta_1
\end{align*}

\subsection{Second change of variables}
We define
\[
w_1=v_1+c S_c w_2+\frac{1}{\sqrt{2}} |\ep|^2.
\]
Since $S_c^\star w_2 = \ep_2$, we have
$S_c w_2 = \ep_2+ 2\partial_x w_2$
and thus, using $\zeta=2R_c\ep_1+c\sqrt{2}\ep_2+|\ep|^2$,
\[
w_1=v_1+c\ep_2+2c\partial_x w_2+\frac{1}{\sqrt{2}} |\ep|^2
%&=\partial_x \ep_1+2c\partial_x w_2+\frac{1}{\sqrt{2}}\left(2 R_c \ep_1+ c{\sqrt{2}}\ep_2+|\ep|^2\right)\\
=\partial_x \ep_1+2c\partial_x w_2+\frac{\zeta}{\sqrt{2}}.
\]
By Lemma \ref{le:01}, the last expression shows that the function $w_1$ has exponential decay at infinity, unlike the function $v_1$ (see Lemma \ref{le:06} below).

First, we compute using the definition of $w_1$ and then Lemma~\ref{le:03}
\begin{align*}
\partial_t w_1 &=\partial_t v_1+\dot c S_c w_2+\sqrt{2} c \dot c w_2\partial_c R_c +c S_c \partial_t w_2+\frac 1{\sqrt{2}} \partial_t |\ep|^2\\
&=S_cL_-S_c^\star w_2 - c^2 S_c S_c^\star w_2+F_2\\
&= (\partial_{x}^2-\beta^2 )\partial_{x}^2 w_2+F_2
\end{align*}
where 
\[
F_2=P_2+\Theta_2+\dot c S_c w_2+\sqrt{2} c\dot c w_2\partial_c R_c
+c S_c P_1+cS_c\Theta_1+\frac 1{\sqrt{2}} \partial_t |\ep|^2.
\]
Second, using $v_1=w_1-c S_c w_2-\frac 1{\sqrt{2}}|\ep|^2$ and $S_c-S_c^\star=2\partial_x$,
we obtain 
\[
\partial_t w_2=-w_1+2 c\partial_x w_2 + F_1\quad\mbox{where}\quad
F_1=P_1+\Theta_1+\frac 1{\sqrt{2}} |\ep|^2.
\]
Summarizing, the transformed problem is defined by
\[
\Bigg\{\begin{aligned}
\partial_t w_1&= (\partial_{x}^2-\beta^2 )\partial_{x}^2 w_2+F_2\\
\partial_t w_2& =-w_1+2 c\partial_x w_2 + F_1
\end{aligned}
\]

\section{Technical Lemmas}
\begin{lemma}\label{le:10}
For any $\kappa>0$ and any $f\in \cE$,
\[
\| f \rho^\kappa \|_{L^\infty} \lesssim \|f\|_\cH
\]
and for any $A>0$,
\[
\int_0^A f^2 \rho^\kappa \lesssim \int_0^1 f^2 + 
\left(\int_0^A (f')^2\right)^\frac 12 \left(\int_0^A f^2 \rho^{2\kappa}\right)^\frac 12.
\]
\end{lemma}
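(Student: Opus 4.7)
My plan is to prove the weighted $L^2$ inequality (the second displayed bound) first, and then deduce the pointwise bound from it using a one-dimensional Sobolev embedding together with a short iteration on the weight.

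For the $L^2$ inequality on $(0,A)$, the key step is an integration by parts with the antiderivative
\[
\Phi(y) := -\int_y^A \rho^\kappa(z)\,dz,
\]
which satisfies $\Phi'=\rho^\kappa$, $\Phi(A)=0$, and $|\Phi(y)|\lesssim \rho^\kappa(y)$ on $[0,A]$ because $\rho^\kappa$ decays like $e^{-\kappa y}$ at infinity. This yields
\[
\int_0^A f^2\rho^\kappa = f(0)^2 \int_0^A \rho^\kappa - 2 \int_0^A ff'\,\Phi,
\]
and Cauchy-Schwarz on the second term (together with $\int_0^A \rho^\kappa <\infty$) produces the advertised contribution $\|f'\|_{L^2(0,A)}\,\|f\rho^\kappa\|_{L^2(0,A)}$. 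For the boundary term $f(0)^2$, I would use the balanced bound
\[
f(0)^2 \lesssim \int_0^1 f^2 + \Bigl(\int_0^1 f^2\Bigr)^{1/2}\Bigl(\int_0^1 (f')^2\Bigr)^{1/2},
\]
obtained by writing $f(0)^2 = f(y)^2 - 2\int_0^y ff'$ and integrating in $y\in(0,1)$. Since $\rho\asymp 1$ on $[0,1]$, one has $(\int_0^1 f^2)^{1/2} \lesssim \|f\rho^\kappa\|_{L^2(0,A)}$, so the cross term folds into $\|f'\|_{L^2(0,A)}\,\|f\rho^\kappa\|_{L^2(0,A)}$ and the second inequality follows.

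For the $L^\infty$ bound I would apply the standard one-dimensional Sobolev inequality $\|g\|_{L^\infty}^2 \lesssim \|g\|_{L^2}\,\|g'\|_{L^2} + \|g\|_{L^2}^2$ to $g = f\rho^\kappa$. Using $|\rho'|\le \rho$, one gets $\|g'\|_{L^2}^2 \lesssim \|f'\|^2 + \int f^2\rho^{2\kappa}$, so the whole estimate reduces to controlling $\int_\RR f^2\rho^{2\kappa}$ by $\|f\|_\cH^2$. When $2\kappa \ge 1$ this is immediate because $\rho^{2\kappa}\le \rho$. For $2\kappa<1$ I would iterate the full-line version of the $L^2$ inequality just proved (obtained by summing its analog on $(-A,0)$ and letting $A\to\infty$),
\[
I_\lambda := \int_\RR f^2\rho^\lambda \lesssim \|f\|_\rho^2 + \|f'\|\,I_{2\lambda}^{1/2},
\]
and perform a finite induction from $\lambda = 2^n\kappa \ge 1$ down to $\lambda = 2\kappa$ to reach $I_{2\kappa}\lesssim \|f\|_\cH^2$.

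The delicate point is the boundary term $f(0)^2$ in the integration by parts: a naive $H^1$ trace bound would produce $\int_0^1 (f')^2$ on the right-hand side, but the statement only allows $\|f'\|_{L^2(0,A)}$ paired with the weighted factor $\|f\rho^\kappa\|_{L^2(0,A)}$. The cure is precisely the unsymmetrized Cauchy-Schwarz form of $f(0)^2$ above combined with the fact that $\rho$ is bounded below on $[0,1]$, so that $(\int_0^1 f^2)^{1/2}$ can be absorbed into $\|f\rho^\kappa\|_{L^2(0,A)}$ rather than generating an illegal $\|f'\|^2$ term. Once this trick is in place, the rest is routine calculus.
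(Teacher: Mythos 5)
Your proposal is correct, but it inverts the paper's logical order and uses a genuinely different mechanism for the pointwise bound. For the weighted $L^2$ inequality your integration by parts against $\Phi(y)=-\int_y^A\rho^\kappa$ (with $|\Phi|\lesssim\rho^\kappa$) is essentially the same computation as the paper's, which writes $f^2(x)=f^2(y)+2\int_y^x f f'$ and applies Fubini to land on $\int_0^A|f||f'|\rho^\kappa$; both treatments dispose of the boundary contribution by averaging over $y\in[0,1]$, though the paper never isolates $f(0)^2$ and so does not need your extra step of absorbing the cross term $(\int_0^1 f^2)^{1/2}(\int_0^1(f')^2)^{1/2}$ into the right-hand side (a step that is nonetheless valid since $\rho\asymp 1$ on $[0,1]$ and $A\geq 1$ in the nontrivial case). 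For the $L^\infty$ bound the paper's argument is direct and independent of the second inequality: from $f^2(x)\lesssim f^2(y)+(|x|+|y|)\int(f')^2$ one averages in $y$ against $\rho$ and multiplies by $\rho^{2\kappa}(x)$, using that $(1+|x|)\rho^{2\kappa}(x)$ is bounded; this is a two-line self-contained proof. Your route through the 1D Gagliardo--Nirenberg inequality applied to $g=f\rho^\kappa$ plus a dyadic iteration $I_\lambda\lesssim\|f\|_\rho^2+\|f'\|\,I_{2\lambda}^{1/2}$ down from an exponent $\geq 1$ is correct (the base case $\rho^{2^n\cdot 2\kappa}\leq\rho$ and the finiteness of each $I_\lambda$ for bounded $f\in\cE$ make the induction legitimate), and it has the mild virtue of exposing the full-line weighted $L^2$ estimate $I_{2\kappa}\lesssim\|f\|_\cH^2$ as a byproduct; but it is noticeably longer than necessary, and you should be aware that the direct averaging argument avoids the iteration entirely.
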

\begin{proof}[Proof of Lemma~\ref{le:10}]
For $x,y\in \RR$,
$
f(x)  = f(y) +  \int_y^x f'
$
and thus by the Cauchy-Schwarz inequality
\[
f^2(x) \lesssim f^2(y)   +  (|x|+|y|) \int (f')^2.
\]
Multiplying by $\rho(y)$ and integrating in $y$, then 
multiplying by $\rho^{2\kappa}(x)$ and taking the supremum in $x$, we obtain the
first inequality.

For the second inequality, we fix $A\geq 1$.
For $0\leq y\leq 1\leq x\leq A$, we write
\[
f^2(x) = f^2(y) + 2 \int_y^x f'(z)f(z) dz
\]
so that multiplying by $\rho^\kappa(x) $ and integrating in $x\in [1,A]$,
\begin{align*}
\int_1^A f^2 \rho^\kappa & \lesssim f^2(y) + \int_1^A \left(\int_0^x |f'(z)| |f(z)| dz \right) \rho^\kappa(x) dx\\
& \lesssim f^2(y) + \int_0^A |f'(z)| |f(z)| \left(\int_z^A \rho^\kappa(x) dx\right) dz 
\lesssim f^2(y) + \int_0^A |f'| |f| \rho^\kappa.
\end{align*}
Thus, by the Cauchy-Schwarz inequality,
\[
\int_1^A f^2 \rho^\kappa \lesssim f^2(y) 
+ \left(\int_0^A (f')^2\right)^\frac12 \left(\int_0^A f^2 \rho^{2\kappa}\right)^\frac 12.
\]
Integrating in $y\in [0,1]$ yields the estimate for $\int_1^A f^2 \rho^\kappa$.
The estimate for $\int_0^1 f^2 \rho^\kappa$ is clear.
\end{proof}
 
Now, we define 
\[
\cN = \left(\int w_1^2+\int (\partial_x w_2)^2+ \int (\partial_x^2 w_2)^2\right)^\frac12.
\]
\begin{lemma}\label{le:04}
For any $\kappa>0$,
\[
\int (\partial_x \ep_2)^2 + \int \ep_2^2 \rho^\kappa 
+\|\ep_2^2 \rho^{\kappa}\|_{L^\infty}
+ \int w_2^2 \rho^\kappa \lesssim \cN^2.
\]
Moreover,
\[
|\partial_x w_2(x)|+|\partial_x^2 w_2 (x)|\lesssim \sup_{|y|\geq |x|} |\partial_x \ep_2(y)| 
+ Q_c(x) |\ep_2(x)|\lesssim \rho^\gamma.
\]
\end{lemma}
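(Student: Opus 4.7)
My plan is to first dispatch the pointwise estimates, then tackle what I expect to be the main obstacle—the bound $\int w_2^2 \rho^\kappa \lesssim \cN^2$—by extracting a hidden orthogonality on $w_2$, and finally derive the remaining integral and sup estimates by straightforward algebra. The pointwise bound on $|\partial_x w_2|+|\partial_x^2 w_2|$ is immediate from Lemma~\ref{le:03} applied with $f=\ep_2$ and $g=w_2$ (permitted by $\int Q_c\ep_2=0$). For the further majoration by $\rho^\gamma$, I would use $|\partial_x\ep_2|\lesssim\rho^\gamma$ from Lemma~\ref{le:01}; since $\ep_2\to 0$ at infinity, integrating $\partial_x\ep_2$ from infinity also yields $|\ep_2|\lesssim\rho^\gamma$, and then $Q_c\lesssim\rho^\beta$ with $\beta>\gamma$ handles the second term.

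The crux is the bound $\int w_2^2\rho^\kappa\lesssim\cN^2$. The key idea is to translate the third orthogonality condition $\int R_c Q_c\,\ep_2=0$ into a hidden orthogonality for $w_2$ itself. From $\ep_2=S_c^\star w_2$ and integration by parts,
\[
0=\int R_c Q_c\,\ep_2=\int S_c(R_c Q_c)\,w_2,
\]
and a short computation using $R_c'=Q_c/\sqrt 2$ and $Q_c'=-\sqrt 2\,R_c Q_c$ gives $S_c(R_cQ_c)=Q_c^2/\sqrt 2$, whence $\int Q_c^2 w_2=0$. I then write $w_2(x)=w_2(0)+\int_0^x \partial_y w_2\,dy$ and test against $Q_c^2$:
\[
w_2(0)\int Q_c^2=-\int Q_c^2(x)\int_0^x \partial_y w_2(y)\,dy\,dx.
\]
Cauchy--Schwarz gives $\bigl|\int_0^x \partial_y w_2\bigr|\leq|x|^{1/2}\|\partial_x w_2\|$. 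Since $\int Q_c^2|x|^{1/2}$ is finite (exponential decay of $Q_c$) and $\int Q_c^2>0$, this forces $|w_2(0)|\lesssim\|\partial_x w_2\|\leq\cN$. Then $|w_2(x)|^2\lesssim\cN^2(1+|x|)$ by a second Cauchy--Schwarz, and integrating against $\rho^\kappa$ yields $\int w_2^2\rho^\kappa\lesssim\cN^2$ for any $\kappa>0$.

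With this in hand, the remaining estimates are now routine. From $\ep_2=-\partial_x w_2+\sqrt 2\,R_c w_2$ with $R_c$ bounded,
\[
\int\ep_2^2\rho^\kappa\lesssim\int(\partial_x w_2)^2+\int w_2^2\rho^\kappa\lesssim\cN^2.
\]
Differentiating this identity and using $|R_c'|^2=Q_c^2/2\lesssim\rho^{2\beta}$ yields
\[
\int(\partial_x\ep_2)^2\lesssim\int(\partial_x^2 w_2)^2+\int w_2^2\rho^{2\beta}+\int(\partial_x w_2)^2\lesssim\cN^2.
\]
Finally, Lemma~\ref{le:10} applied to $\ep_2$ with exponent $\kappa/2$ gives $\|\ep_2\rho^{\kappa/2}\|_\infty\lesssim\|\ep_2\|_\cH\lesssim\cN$, hence $\|\ep_2^2\rho^\kappa\|_\infty\lesssim\cN^2$.
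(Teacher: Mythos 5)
Your argument for the main chain of estimates is essentially the paper's own proof: you derive the hidden orthogonality $\int Q_c^2 w_2=0$ from $\int R_cQ_c\,\ep_2=0$ by the same integration by parts, deduce the weighted Poincar\'e inequality $\int w_2^2\rho^\kappa\lesssim\|\partial_x w_2\|^2$ (which the paper merely calls ``standard''; your explicit derivation via the bound on $w_2(0)$ is a correct way to fill in that detail), and then read off the bounds on $\ep_2$, $\partial_x\ep_2$ and the $L^\infty$ norm from $\ep_2=S_c^\star w_2$ exactly as the paper does. That part is sound.

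There is, however, a genuine error in your treatment of the pointwise estimate: you assert that $\ep_2\to 0$ at spatial infinity and conclude $|\ep_2|\lesssim\rho^\gamma$. Neither holds here. The energy space for (GP) controls only $\psi'$ and $\eta=1-|\psi|^2$, not the phase of $\psi$ at infinity; correspondingly, Lemma~\ref{le:01} bounds $\|\ep\|_\cH$, $\|\zeta\|$ and $|\partial_x^k\ep|$ for $k\geq 1$, but never $\ep$ itself pointwise, and at $\pm\infty$ the perturbation $\ep$ need only approach the circle $\{z:|U_c(\pm\infty)+z|=1\}$, which is not reduced to $\{0\}$. This is precisely why the statement of the lemma (and of Lemma~\ref{le:05}) carries the factor $Q_c(x)$ in front of $|\ep_2(x)|$. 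The conclusion $Q_c(x)|\ep_2(x)|\lesssim\rho^\gamma$ is nevertheless correct, but must be obtained differently: by the first inequality of Lemma~\ref{le:10}, $|\ep_2(x)|\,\rho^{\beta/2}(x)\lesssim\|\ep_2\|_\cH\lesssim 1$, hence
\[
Q_c(x)\,|\ep_2(x)|\lesssim \rho^{\beta}(x)\,\rho^{-\beta/2}(x)=\rho^{\beta/2}(x)\leq\rho^\gamma(x),
\]
using $Q_c\lesssim\rho^\beta$ and $\gamma<\beta/2$. With this replacement your proof is complete and coincides with the paper's.
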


\begin{lemma}\label{le:05}
For any $\kappa>0$,
\[
\int (\partial_x \ep_1)^2 \rho^\kappa + \int \ep_1^2 \rho^\kappa
+\|\ep_1^2 \rho^{\kappa}\|_{L^\infty}\lesssim \cN^2.
\]
Moreover,
\[
|w_1(x)|\lesssim |\partial_x \ep_1(x)|
+ \sup_{|y|\geq |x|} |\partial_x \ep_2(y)| + Q_c(x) |\ep_2(x)|+
|\zeta(x)|\lesssim \rho^\gamma.
\]
\end{lemma}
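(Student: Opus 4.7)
The plan is to leverage both expressions for $w_1$ derived in Section~\ref{S:3.2}, namely $w_1 = v_1 + cS_c w_2 + \frac{1}{\sqrt{2}}|\ep|^2$ and $w_1 = \partial_x\ep_1 + 2c\partial_x w_2 + \frac{1}{\sqrt{2}}\zeta$, together with the orthogonality $\int Q_c\ep_1 = 0$ from Lemma~\ref{le:01}. The pointwise bound on $w_1$ follows immediately from the second expression and the pointwise estimate on $\partial_x w_2$ provided by Lemma~\ref{le:04}; the $\rho^\gamma$ decay is then a direct consequence of Lemma~\ref{le:01} (via $|\partial_x\ep_j|, |\zeta| \lesssim \rho^\gamma$), the inequality $Q_c \lesssim \rho^\beta \leq \rho^\gamma$ (since $\gamma < \beta/2 < \beta$), and the uniform boundedness of $\ep_2$.

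For the weighted $L^2$ bounds, squaring $\partial_x\ep_1 = w_1 - 2c\partial_x w_2 - \zeta/\sqrt{2}$ gives $\int(\partial_x\ep_1)^2 \rho^\kappa \lesssim \cN^2 + \int \zeta^2 \rho^\kappa$. Inserting $\zeta = 2R_c\ep_1 + c\sqrt{2}\ep_2 + |\ep|^2$ and applying Lemma~\ref{le:04} reduces this to $\int(\partial_x\ep_1)^2\rho^\kappa \lesssim \cN^2 + \int\ep_1^2\rho^\kappa + \int|\ep|^4\rho^\kappa$. The quartic term is controlled via $\|\ep\rho^{\kappa/4}\|_{L^\infty} \lesssim \|\ep\|_\cH \lesssim \alpha_0$ from Lemma~\ref{le:10}, which yields
\[
\int|\ep|^4\rho^\kappa \leq \|\ep\rho^{\kappa/4}\|_{L^\infty}^2 \int|\ep|^2\rho^{\kappa/2} \lesssim \alpha_0^2\Bigl(\int\ep_1^2\rho^{\kappa/2} + \cN^2\Bigr).
\]
Everything therefore reduces to proving $\int\ep_1^2\rho^\kappa \lesssim \cN^2$ for every $\kappa > 0$.

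To establish this, I rewrite the first expression as $S_c\ep_1 = G$ with $G = w_1 - cS_c w_2 - |\ep|^2/\sqrt{2}$, for which the same considerations yield $\int G^2\rho^\kappa \lesssim \cN^2 + \alpha_0^2\int\ep_1^2\rho^{\kappa/2}$. Integration by parts, using $R_c' = Q_c/\sqrt{2}$ and $2R_c^2 = \beta^2 - 2Q_c$, gives the key identity
\[
\int(S_c\ep_1)^2\rho^\kappa = \int(\partial_x\ep_1)^2\rho^\kappa + \int\bigl(\beta^2 - 3Q_c + \sqrt{2}\kappa R_c\tanh(x)\bigr)\ep_1^2\rho^\kappa.
\]
Since $R_c\tanh(x) \to \beta/\sqrt{2}$ at $\pm\infty$, the effective potential tends to $\beta^2 + \kappa\beta > 0$, so the quadratic form is coercive outside a compact set. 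On the compact region where $\beta^2 - 3Q_c$ is negative, the deficit is absorbed using the orthogonality $\int Q_c\ep_1 = 0$: this is the weighted analogue of the standard coercivity of $L_+ = S_c^\star S_c$ on $\{Q_c\}^\perp$, valid because $\rho^\kappa$ is comparable to a constant on any fixed compact set. Combining yields $\int\ep_1^2\rho^\kappa \lesssim \cN^2 + \alpha_0^2\int\ep_1^2\rho^{\kappa/2}$; iterating in $\kappa \mapsto \kappa/2$ and using the rough a priori bound $\int\ep_1^2\rho^\delta \lesssim \alpha_0^2/\delta$ (derivable from Lemma~\ref{le:10}) closes the bootstrap for $\alpha_0$ small enough. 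Finally, $\|\ep_1^2\rho^\kappa\|_{L^\infty}$ is handled by applying the one-dimensional Sobolev inequality $\|f\|_{L^\infty}^2 \leq 2\|f\|\,\|f'\|$ to $f = \ep_1\rho^{\kappa/2}$, together with the two weighted $L^2$ bounds just obtained.

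The main obstacle is the coercivity step: unlike the unweighted coercivity of $L_+$ on $\{Q_c\}^\perp$, the presence of $\rho^\kappa$ demands a localisation argument exploiting the single-mode orthogonality on the compact region where the potential is negative, followed by a controlled bootstrap in $\kappa$ to keep constants uniform.
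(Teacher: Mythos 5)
Your pointwise bound on $w_1$ and the reduction of $\int(\partial_x\ep_1)^2\rho^\kappa$ to $\int\ep_1^2\rho^\kappa$ follow the paper. For the core estimate $\int\ep_1^2\rho^\kappa\lesssim\cN^2$, however, you take a genuinely different route. The paper exploits that $S_c=Q_c\cdot\partial_x\cdot Q_c^{-1}$ is an explicitly integrable first-order operator: writing $S_c\ep_1=-\tfrac1{\sqrt2}\ep_1^2+h$, it integrates to $\ep_1=bQ_c+Q_c\int_0^x\ep_1^2/Q_c+Q_c\int_0^x h/Q_c$, determines the constant $b$ from $\int\ep_1Q_c=0$, and estimates each term directly; the quadratic term is absorbed with the \emph{same} weight $\rho^\kappa$ thanks to the second inequality of Lemma~\ref{le:10}, so no iteration in $\kappa$ is needed and no spectral input is used. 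You instead invoke a weighted coercivity of the quadratic form of $L_+=S_c^\star S_c$ on $\{Q_c\}^\perp$, and because your treatment of the quartic term degrades the weight from $\rho^\kappa$ to $\rho^{\kappa/2}$, you must run a bootstrap $\kappa\mapsto\kappa/2$ closed by a rough a priori bound. Both strategies work, and yours has the merit of using only a familiar coercivity statement; but it is heavier, and it reintroduces exactly the spectral analysis that the paper's factorization $S_c=Q_c\cdot\partial_x\cdot Q_c^{-1}$ is designed to avoid.

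One step of your argument is asserted rather than proved and is where the real work lies: the claim that
\[
\int(\partial_x f)^2\rho^\kappa+\int\bigl(\beta^2-3Q_c+\sqrt2\,\kappa R_c\tanh(x)\bigr)f^2\rho^\kappa\gtrsim\int f^2\rho^\kappa
\quad\text{for } \int fQ_c=0,
\]
with a constant uniform over the range of $\kappa$ visited by the iteration (which accumulates at $0$). Saying that $\rho^\kappa$ is comparable to a constant on a compact set does not by itself yield this: one must either run a contradiction--compactness argument (using that $S_cf=0$ forces $f\in\operatorname{span}\{Q_c\}$, and that $\int fQ_c$ passes to the limit, which requires $\kappa<2\beta$), or substitute $g=f\rho^{\kappa/2}$ to reduce to $\langle(L_+ +O(\kappa))g,g\rangle$ with an $O(\kappa)$-perturbed orthogonality and invoke the unweighted coercivity of $L_+$ on $\{Q_c\}^\perp$ (itself a consequence of $L_+=S_c^\star S_c\geq0$ with simple kernel $Q_c$ and essential spectrum $[\beta^2,\infty)$). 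Either route succeeds, so the gap is reparable, but as written the crucial coercivity and its uniformity in $\kappa$ are not established. A smaller point: Lemma~\ref{le:10} gives $\|f\rho^\kappa\|_{L^\infty}\lesssim\|f\|_\cH$ with a constant that blows up as $\kappa\to0$, so your a priori bound $\int\ep_1^2\rho^\delta\lesssim\alpha_0^2/\delta$ is really $O(\alpha_0^2/\delta^2)$; the geometric factor $(C\alpha_0^2)^n$ still beats the $4^n$ growth, so the bootstrap closes, but this should be said.
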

\begin{lemma}\label{le:06}
It holds
\[
|\dot a-c|^2+|\dot c|+|\dot \theta|^2\lesssim \cN^2, \quad
\|\rho^\frac\gamma4\partial_x F_1\|\lesssim \cN^2.
\]
Moreover, $F_2=F_{2,1}+\partial_x F_{2,2}$ where
\[
\|\rho^\frac\gamma4 F_{2,1}\|+\|\rho^\frac\gamma4 F_{2,2}\|
\lesssim \cN^2.
\]
\end{lemma}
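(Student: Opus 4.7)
The first statement $|\dot a - c|^2 + |\dot c| + |\dot\theta|^2 \lesssim \cN^2$ follows immediately from Lemma~\ref{le:01} combined with the weighted $L^2$ bounds $\int\rho^\gamma\ep_j^2\lesssim\cN^2$ from Lemmas~\ref{le:04} and~\ref{le:05} (taking $\kappa=\gamma$).

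For $\|\rho^{\gamma/4}\partial_x F_1\|\lesssim\cN^2$, with $F_1 = P_1 + \Theta_1 + \frac{1}{\sqrt 2}|\ep|^2$, I would apply Lemma~\ref{le:03} with $\kappa=\gamma/4$ to get $\|\rho^{\gamma/4}\partial_x P_1\|\lesssim \|\rho^{\gamma/8}(W+N_1)^\perp\|$ and similarly for $\Theta_1$. Bilinear terms $ab$ are bounded via $\|\rho^{\gamma/8}ab\|\lesssim\|\rho^{\gamma/16}a\|_{L^\infty}\|\rho^{\gamma/16}b\|$, each factor $O(\cN)$ by Lemmas~\ref{le:04} and~\ref{le:05}. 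The scalar prefactor $\dot c$ in $W = \sqrt 2 \dot c w_2 \partial_c R_c$ contributes a second $\cN$ since $|\dot c|\lesssim\cN^2$, and $\partial_x|\ep|^2 = 2\ep_1\partial_x\ep_1+2\ep_2\partial_x\ep_2$ is handled the same way. The only apparently linear-in-$\cN$ piece of $\Omega_1$ is $\dot\theta R_c$; using $\int R_c Q_c = 0$ and $R_c^2 = \beta^2/2 - Q_c$, an explicit computation via Lemma~\ref{le:03} shows that its contribution to $\Theta_1$ is the constant-in-$x$ function $-\dot\theta/\sqrt 2$, killed by $\partial_x$.

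For the decomposition $F_2 = F_{2,1} + \partial_x F_{2,2}$, the plan is to expand each occurrence of $S_c$ as $\partial_x + \sqrt 2 R_c$, collect $\partial_x$-coefficients into $F_{2,2}$ and the rest into $F_{2,1}$, and then remove apparently linear-in-$\cN$ contributions thanks to two structural cancellations. First, $S_c R_c' = 0$ (equivalent to $R_c'' + \sqrt 2 R_c R_c' = 0$), so the linear piece $(\dot a-c)R_c'$ in $\Omega_2$ contributes opposite-signed linear terms to the naive $\partial_x F_{2,2}$ (via $\partial_x\Omega_2$) and to the naive $F_{2,1}$ (via $\sqrt 2 R_c\Omega_2$); these cancel in $F_2$, and one accordingly subtracts $(\dot a-c)R_c'$ from $F_{2,2}$ and adds the corresponding $-(\dot a-c)R_c Q_c$ to $F_{2,1}$. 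Second, the constant-in-$x$ terms $\dot\theta c/\sqrt 2$ in $\Omega_2$ and $-\dot\theta/\sqrt 2$ in $\Theta_1$ (identified in the $F_1$ step) produce the cancellations $\dot\theta c/\sqrt 2 - c\dot\theta/\sqrt 2 = 0$ in $F_{2,2}$ and $\dot\theta c R_c - c\dot\theta R_c = 0$ in $F_{2,1}$ under $S_c$. After these adjustments, $F_{2,2}$ consists of the quadratic terms $N_2$, $(\dot a-c)\partial_x\ep_1 - \dot c\partial_c R_c + \dot\theta\ep_2$, $\dot c w_2$, $cP_1$, $c(\Theta_1+\dot\theta/\sqrt 2)$, together with the total-derivative part of $\partial_t|\ep|^2$ (extracted by substituting the equations for $\partial_t\ep_j$ and integrating by parts the $L_\pm$-terms); $F_{2,1}$ collects the remaining quadratic expressions. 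Each term is then bounded in $\rho^{\gamma/4}$-weighted $L^2$ by $\cN^2$ using the H\"older-type estimates of the previous paragraph.

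The main obstacle is the careful bookkeeping: without recognizing these two cancellations, the naive decomposition leaves linear-in-$\cN$ pieces in both $F_{2,1}$ and $F_{2,2}$, and the quadratic estimate only emerges through the identity $S_c R_c' = 0$ and the constant-in-$x$ pairing between $\Omega_2$ and $\Theta_1$.
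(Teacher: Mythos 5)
Your proposal is correct and follows essentially the same route as the paper: the first bound reduces to $\int\rho^\gamma|\ep|^2\lesssim \cN^2$ via Lemmas~\ref{le:01}, \ref{le:04} and \ref{le:05}; the bound on $\partial_x F_1$ uses Lemma~\ref{le:03} together with the key observation that the contribution of $\dot\theta R_c$ to $\Theta_1$ is the constant $-\dot\theta/\sqrt{2}$, killed by $\partial_x$; and the decomposition of $F_2$ rests on the same two structural cancellations ($S_cR_c'=0$ and the pairing of the $\dot\theta$-constant terms coming from $\Theta_2$ and $cS_c\Theta_1$) plus the substitution of the equations for $\partial_t\ep_1,\partial_t\ep_2$ into $\partial_t|\ep|^2$ so that the second-derivative terms combine into a total derivative. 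The only difference is immaterial bookkeeping in which quadratic terms are assigned to $F_{2,1}$ versus $\partial_x F_{2,2}$ (the paper keeps $S_cN_2$, $cS_cP_1$, etc.\ wholesale in $F_{2,1}$), and either choice yields the stated $\cN^2$ bounds.
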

\begin{proof}[Proof of Lemma~\ref{le:04}]
The orthogonality $\int \ep_2 Q_c=0$ was already used to construct $w_2$ in \S~\ref{S:3.2}.
Now, we use the second orthogonality relation on $\ep_2$, which is $\int \ep_2 R_c Q_c=0$.
Indeed, 
using $Q_c \cdot S_c^\star = - \partial_x \cdot Q_c$, it implies that
\[
0=\int\ep_2 R_c Q_c=\int (S_c^\star w_2) R_c Q_c
=-\int R_c \partial_x ( w_2 Q_c) = \frac1{\sqrt{2}} \int Q_c^2 w_2.
\]
From this orthogonality relation, it is standard to prove
the inequality
\[
\int w_2^2 \rho^\kappa\lesssim \int (\partial_x w_2)^2.
\]
From $\ep_2=S_c^\star w_2$, we obtain
$|\ep_2|\lesssim |\partial_x w_2| + |w_2|$
and so
\[
\int \ep_2^2 \rho^\kappa \lesssim \int \left((\partial_x w_2)^2 + w_2^2\right) \rho^\kappa \lesssim \int (\partial_x w_2)^2.
\]
Moreover, differentiating, we have $\partial_x \ep_2 = -\partial_x^2 w_2
+ \sqrt{2} R_c \partial_x w_2 + Q_c w_2$, and so
\[
\int (\partial_x \ep_2)^2 \lesssim \int (\partial_x^2 w_2)^2 + \int (\partial_x w_2)^2 
+ \int w_2^2 \rho^\kappa.
\]
It is standard to obtain the $L^\infty$ bound from the above.

The second estimate of the lemma follows from  Lemma~\ref{le:03}
and Lemma~\ref{le:01}.
\end{proof}
\begin{proof}[Proof of Lemma~\ref{le:05}]
By the definition of $w_1$ and $v_1=S_c \ep_1$, we have
\[
S_c \ep_1 = Q_c \partial_x(\ep_1/Q_c) = -\frac1{\sqrt{2}} \ep_1^2 + h \quad\mbox{where}\quad
h=w_1-cS_cw_2 - \frac1{\sqrt{2}} \ep_2^2.
\]
By integration on $[0,x]$, we have
\[
\ep_1 = b Q_c + Q_c \int_0^x \frac{\ep_1^2}{Q_c} + Q_c\int_0^x \frac{h}{Q_c}
\]
for some integration constant $b$.
Using the orthogonality relation $\int \ep_1 Q_c=0$, we obtain
\[
b \int Q_c^2 
= -\int Q_c^2 \int_0^x \frac{\ep_1^2}{Q_c} - \int Q_c^2 \int_0^x \frac{h}{Q_c} 
\]
and so by the Fubini theorem and the Cauchy-Schwarz inequality
\[
|b| \lesssim \int \ep_1^2 Q_c + \left(\int h^2 Q_c^\frac 32\right)^\frac12.
\]
Using Lemma~\ref{le:10}, since $\int (\partial_x\ep_1)^2\lesssim \alpha_0$
and $\int_0^1 \ep_1^2 \lesssim \int \ep_1^2 Q_c^2\lesssim \alpha_0$ (Lemma \ref{le:01}), we obtain
\[
b^2 \lesssim \alpha_0 \int \ep_1^2 Q_c^2 +\int h^2 Q_c^\frac 32.
\]
Then we estimate
\[
\int \ep_1^2 \rho^\kappa \lesssim b^2 \int \rho^\kappa Q_c^2 +\int \rho^\kappa Q_c^2 \left(\int_0^x \frac{\ep_1^2}{Q_c} \right)^2
+ \int \rho^\kappa Q_c^2\left(\int_0^x \frac{h}{Q_c}\right)^2.
\]
For the second term on the right-hand side, we use
Lemma~\ref{le:10} to obtain
\begin{multline*}
\int \rho^\kappa Q_c^2 \left(\int_0^x \frac{\ep_1^2}{Q_c} \right)^2
= \int \rho^\kappa Q_c^2 \left(\int_0^x \frac{\ep_1^2\rho^\frac\kappa4}
{Q_c\rho^\frac\kappa4} \right)^2
 \leq \int \rho^\frac\kappa2 \left(\int_0^x \ep_1^2 \rho^\frac\kappa4 \right)^2\\
\lesssim \left(\int \ep_1^2 \rho^\kappa\right)^2 
+\left(\int (\partial_x \ep_1)^2\right) \int \rho^\frac\kappa2 \left|\int_0^x \ep_1^2\rho^\frac\kappa2\right|
\lesssim \alpha_0 \int \ep_1^2 \rho^\kappa.
\end{multline*}
Besides,
\[
 \int \rho^\kappa Q_c^2\left(\int_0^x \frac{h}{Q_c}\right)^2
 \lesssim \int h^2 \rho^{\frac\kappa2}.
\]
Combining the previous estimates, assuming $\kappa\leq \beta/2$, we have obtained
\[
\int \ep_1^2 \rho^\kappa \lesssim \alpha_0 \int \ep_1^2 \rho^\kappa +\int h^2 \rho^{\frac\kappa2}
\]
and thus for $\alpha_0$ small enough, we have proved
$\int \ep_1^2 \rho^\kappa\lesssim \int h^2 \rho^{\frac\kappa2}$.
Using the expression of $h$,
\[
\int \ep_1^2 \rho^\kappa \lesssim \int w_1^2+\int (\partial_x w_2)^2 +\int w_2^2 \rho^{\frac\kappa2}
+ \int \ep_2^4 \rho^{\frac\kappa2}.
\]
Using Lemma~\ref{le:10}, we have
$\|\ep_2 \rho^{\kappa/8}\|_{L^\infty}\lesssim \|\ep_2\|_{\cH}\lesssim 1$
and so by the proof of Lemma~\ref{le:04}, we obtain
\[
\int \ep_1^2 \rho^\kappa \lesssim \int w_1^2+\int (\partial_x w_2)^2.
\]
Using
$
\partial_x \ep_1
= -w_1+2c\partial_x w_2+ \sqrt{2}R_c \ep_1+ c \ep_2+\frac{1}{\sqrt{2}} |\ep|^2 
$
we obtain
\[
\int (\partial_x \ep_1)^2 \rho^\kappa \lesssim \int w_1^2+\int (\partial_x w_2)^2.
\]
To prove the second estimate, we recall that$
w_1=\partial_x \ep_1+2c\partial_x w_2+\frac{\zeta}{\sqrt{2}}
$
so that 
\[
|w_1(x)|\lesssim |\partial_x \ep_1(x)|
+ |\partial_x w_2(x)| + Q_c(x) |\ep_2(x)|+
|\zeta(x)|.
\]
We finish the proof using 
Lemma~\ref{le:01} and Lemma~\ref{le:04}.
\end{proof}
\begin{proof}[Proof Lemma~\ref{le:06}]
From Lemma~\ref{le:01} and Lemmas~\ref{le:04} and
\ref{le:05}, one has
\[
|\dot a-c|^2+|\dot c|+|\dot \theta|^2
\lesssim \int |\ep|^2 \rho^\gamma\lesssim \cN^2.
\]

\emph{Estimate of $\partial_x F_1$.}
By the definition of $F_1$
\[
\partial_x F_1 = \partial_x P_1 + \sqrt{2} (\partial_x \ep_1)\ep_1
+\sqrt{2} (\partial_x \ep_2)\ep_2 + \partial_x \Theta_1.
\]
By the definition of $P_1$ and Lemma~\ref{le:03},
\[
\|\rho^\frac\gamma4\partial_x P_1\|
\lesssim \|\rho^\frac\gamma8W^\perp\|+ \|\rho^\frac\gamma8N_1^\perp\|
\lesssim \|\rho^\frac\gamma8W\|+ \|\rho^\frac\gamma8N_1\|.
\]
From Lemma~\ref{le:04}, one has $\|\rho^\frac\gamma8 w_2\|\lesssim \cN$, and so
by the definition of $W$, $|\partial_c R_c|\lesssim 1$ and $|\dot c|\lesssim \cN^2$, it holds
$\|\rho^\frac\gamma8W\|\lesssim\cN^3$.
Moreover, $|N_1|\lesssim|\ep|^2$ and thus from Lemmas~\ref{le:04} and
\ref{le:05}, it holds
\[
\|\rho^\frac\gamma8N_1\|\lesssim \|\rho^\frac\gamma{16}\ep\|
\|\rho^\frac\gamma{16}\ep\|_{L^\infty}\lesssim \cN^2.
\]
Therefore, $\|\rho^\frac\gamma4\partial_x P_1\|\lesssim \cN^2$ is proved.
Similarly, we see that
\[
\|\rho^\frac\gamma4(\partial_x \ep_1)\ep_1\|
+\|\rho^\frac\gamma4(\partial_x \ep_2)\ep_2 \|
\lesssim \cN^2.
\]
Now, we deal with $\partial_x \Theta_1$.
We decompose
$\Omega_1^\perp=\Omega_{1,1}^\perp+\Omega_{1,2}^\perp
=-S_c^\star \Theta_{1,1}-S_c^\star\Theta_{1,2}$
where
\[
\Omega_{1,1}=-(\dot a -c)\partial_x\ep_2+\frac{\dot c}{\sqrt{2}}
+\dot\theta \ep_2 ,\quad
\Omega_{1,2}=\Omega_{1,2}^\perp=\dot\theta R_c.
\]
The term $\Theta_{1,2}$ could be problematic since $\Omega_{1,2}=\dot \theta R_c$ 
 is linear in $\cN$.
However, since $S_c^\star 1 = \sqrt{2}R_c$, we have $\Theta_{1,2}=-\frac1{\sqrt2}\dot \theta$.
Thus, $\partial_x \Theta_{1,2}=0$ and this term actually has no contribution to
$\partial_x F_1$. The terms in $\Omega_{1,1}$ are quadratic and as before, by Lemma~\ref{le:03}, we have
\[
\|\rho^\frac\gamma4\partial_x \Theta_{1,1}\|
\lesssim \|\rho^\frac\gamma8\Omega_{1,1}^\perp\|
\lesssim \|\rho^\frac\gamma8\Omega_{1,1}\|\lesssim \cN^2.
\]

\emph{Estimate of $F_2$.}
In the definition of $F_2$, we replace $P_2$ by its expression
and we insert the expressions of $\partial_t \ep_1$ and $\partial_t\ep_2$
\begin{align*}
F_2 &= S_c N_2+\sqrt{2} \dot c \ep_1 \partial_c R_c+\Theta_2+\dot c S_c w_2+\sqrt{2} c\dot c w_2\partial_c R_c+c S_c P_1+c S_c \Theta_1\\
&\quad +2 \sqrt{2} \ep_1 (-\partial_x^2\ep_2+c\partial_x\ep_1-Q_c\ep_2
+\frac{c}{\sqrt{2}}\zeta-\zeta\ep_2+\Omega_2)\\
&\quad
+2\sqrt{2}\ep_2(\partial_x^2\ep_1+c\partial_x\ep_2+Q_c\ep_1-R_c\zeta-\zeta\ep_1-\Omega_1).
\end{align*}
Then, replacing $\Theta_2$ and $\Theta_{1,2}$ by their definitions, 
we split $F_2 =F_{2,1}+\partial_x F_{2,2}$
where
\begin{align*}
F_{2,1}&=S_c N_2+\sqrt{2} \dot c \ep_1 \partial_c R_c+\dot c S_c w_2+\sqrt{2} c\dot c w_2\partial_c R_c+c S_c P_1\\
&\quad -(\dot a-c)\sqrt{2}R_c\partial_x\ep_1-\dot cS_c\partial_cR_c
+\dot \theta S_c \ep_2
+c S_c \Theta_{1,1}\\
&\quad +\ep_1(2c\zeta+2\sqrt{2}c\zeta\ep_2+2\sqrt{2}\Omega_2)
-2\sqrt{2}\ep_2(R_c\zeta+\zeta\ep_1+\Omega_1),\\
F_{2,2}&=
-(\dot a-c)\partial_x\ep_1
+2\sqrt{2}(\ep_2\partial_x\ep_1-\ep_1\partial_x\ep_2+c\ep_1\ep_2).
\end{align*}
Observe that for the terms
$\Theta_2$ and $cS_c\Theta_1$, we have used $S_cR_c'=0$ and a cancellation of two terms in $\dot \theta$,
exactly as for $F_1$.
From the expressions of $F_{2,1}$ and $F_{2,2}$ above, in which all the terms 
are quadratic, we obtain similarly as before
$
\|\rho^\frac\gamma4 F_{2,1}\|+\|\rho^\frac\gamma4F_{2,2}\|\lesssim \cN^2
$.
\end{proof}

\section{Virial identity for the transformed problem}
Set 
\[
\cI=\int x (\partial_x w_2)(w_1-c\partial_x w_2)
\]
Note that by Lemmas~\ref{le:04} and~\ref{le:05}, the functional $\cI$ is well defined
and uniformly bounded.

\begin{lemma}\label{le:07}
It holds $\dot{\cI}=\frac12\cQ+ \cR$ where
\begin{align*}
\cQ &= \int (w_1-2c\partial_x w_2)^2+\beta^2\int (\partial_x w_2)^2
+3 \int (\partial_x^2 w_2)^2\\
\cR&=\int x(\partial_x w_2) F_2+\int x (\partial_x F_1) (w_1-2 c \partial_x w_2)-\dot c \int x(\partial_x w_2)^2.
\end{align*}
\end{lemma}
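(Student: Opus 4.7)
The plan is to differentiate $\cI$ directly in time, substitute the transformed equations from Section~\ref{S:3.2}, and identify the square structure in $\cQ$ after several integrations by parts. Writing out the time derivative (and using that $c=c(t)$ to produce the last term),
\[
\dot\cI=\int x(\partial_x\partial_t w_2)(w_1-c\partial_x w_2)+\int x(\partial_x w_2)\partial_t w_1-c\int x(\partial_x w_2)(\partial_x\partial_t w_2)-\dot c\int x(\partial_x w_2)^2.
\]
I then substitute $\partial_t w_1=(\partial_x^2-\beta^2)\partial_x^2 w_2+F_2$ and $\partial_x\partial_t w_2=-\partial_x w_1+2c\partial_x^2 w_2+\partial_x F_1$, and split the result into forcing contributions (those involving $F_1$, $F_2$, $\dot c$) and principal (quadratic in $w_1,w_2$) contributions.

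The forcing terms group cleanly: the two occurrences of $\partial_x F_1$, appearing with weights $(w_1-c\partial_x w_2)$ and $-c(\partial_x w_2)$, combine into $w_1-2c\partial_x w_2$, and together with $\int x(\partial_x w_2)F_2$ and $-\dot c\int x(\partial_x w_2)^2$ they yield exactly $\cR$. For the principal terms I would proceed by integration by parts. First, $-\int x(\partial_x w_1)w_1=\frac12\int w_1^2$. Next, the cross terms linear in $c$ collapse, via $2c\int x(\partial_x w_1)(\partial_x w_2)+2c\int x w_1(\partial_x^2 w_2)=-2c\int w_1\partial_x w_2$, while the $c^2$ terms give $-4c^2\int x(\partial_x w_2)(\partial_x^2 w_2)=2c^2\int(\partial_x w_2)^2$. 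These three pieces assemble into the perfect square $\frac12\int(w_1-2c\partial_x w_2)^2$. The $-\beta^2$ contribution gives $-\beta^2\int x(\partial_x w_2)(\partial_x^2 w_2)=\frac{\beta^2}2\int(\partial_x w_2)^2$, and the fourth-order term is handled by two successive integrations by parts: $\int x(\partial_x w_2)(\partial_x^4 w_2)=\frac32\int(\partial_x^2 w_2)^2$. Summing the three contributions reproduces $\frac12\cQ$.

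The main obstacle is the bookkeeping: every $c$- and $c^2$-cross term must be tracked carefully so that the perfect square $(w_1-2c\partial_x w_2)^2$ emerges, and the coefficient $\frac32$ in front of $\int(\partial_x^2 w_2)^2$ relies on two integrations by parts producing consistent signs. All boundary terms at infinity are harmless because the pointwise exponential bounds $|w_1|,|\partial_x w_2|,|\partial_x^2 w_2|\lesssim \rho^\gamma$ supplied by Lemmas~\ref{le:04} and~\ref{le:05} make each integration rigorous and ensure $\cI$ itself is well defined.
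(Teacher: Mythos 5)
Your proposal is correct and follows essentially the same route as the paper: differentiate $\cI$ (picking up the $-\dot c\int x(\partial_x w_2)^2$ term), substitute the transformed system, collect the $F_1,F_2$ contributions into $\cR$, and integrate the principal quadratic terms by parts to produce $\frac12\int(w_1-2c\partial_x w_2)^2+\frac{\beta^2}2\int(\partial_x w_2)^2+\frac32\int(\partial_x^2 w_2)^2=\frac12\cQ$. All the stated intermediate identities (the collapse of the $c$- and $c^2$-cross terms, the coefficient $\frac32$ from the fourth-order term) check out.
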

\begin{proof}
We compute from the system for $(w_1,w_2)$
\begin{align*}
\dot{\cI}&=\int x(\partial_{x}\partial_t w_2)(w_1-c\partial_x w_2)+\int x(\partial_x w_2) (\partial_t w_1 -c \partial_{x}\partial_t w_2)-\dot c\int x (\partial_x w_2)^2\\
&=\int x(\partial_x(-w_1+2 c\partial_x w_2+F_1))(w_1-c\partial_x w_2)
+\int x (\partial_x w_2)((\partial_{x}^2-\beta^2)\partial_{x}^2 w_2+F_2)\\
&\quad -c\int x (\partial_x w_2)\partial_x(-w_1+2 c\partial_x w_2 + F_1)-\dot c\int x (\partial_x w_2)^2 = \cQ+\cR
\end{align*}
where $\cR$ is defined as in the statement of the lemma and 
\begin{align*}
\cQ &=-2\int x(\partial_x(w_1-2 c\partial_x w_2)) (w_1-2c\partial_x w_2)
+2\int x(\partial_x w_2)(\partial_{x}^2-\beta^2)\partial_{x}^2 w_2\\
&= \int (w_1-2c\partial_x w_2)^2+\beta^2\int (\partial_x w_2)^2+3 \int (\partial_x^2 w_2)^2
\end{align*}
using integrations by parts.
\end{proof}

\begin{lemma}\label{le:08}
It holds
\[
\cQ \geq \frac {\beta^2}{16} \cN^2.
\]
\end{lemma}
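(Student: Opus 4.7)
The plan is to obtain the bound by directly expanding the first square in $\cQ$ and controlling the resulting cross term by a single application of Young's inequality. Using $4c^2 + \beta^2 = 3c^2 + 2$, I would first rewrite
\[
\cQ = \int w_1^2 - 4c \int w_1 \partial_x w_2 + (3c^2 + 2)\int (\partial_x w_2)^2 + 3\int(\partial_x^2 w_2)^2.
\]

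For a parameter $\lambda > 0$ to be tuned, Young's inequality $2|ab| \leq \lambda a^2 + b^2/\lambda$ then yields
\[
\cQ \geq (1 - 2|c|\lambda)\int w_1^2 + \left(3c^2 + 2 - \frac{2|c|}{\lambda}\right)\int (\partial_x w_2)^2 + 3\int(\partial_x^2 w_2)^2,
\]
so it suffices to make each of these three coefficients at least $\beta^2/16$. Since $\beta^2 \leq 2$, the last coefficient $3$ is automatically large enough. For $c \neq 0$ I would set $\lambda = (16 - \beta^2)/(32|c|)$ so that the coefficient of $\int w_1^2$ is exactly $\beta^2/16$. The case $c = 0$ is immediate: there is no cross term and every coefficient in $\cQ$ is already at least $\beta^2/16 = 1/8$.

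The only non-obvious point is then to verify that, with this choice of $\lambda$, the coefficient of $\int(\partial_x w_2)^2$ is also at least $\beta^2/16$. Substituting the chosen $\lambda$ and clearing denominators, this reduces to the elementary polynomial inequality
\[
49 c^4 - 308 c^2 + 420 \geq 0.
\]
Viewed as a quadratic in $c^2$, the left-hand side factors as $49(c^2 - 2)(c^2 - 30/7)$, and since $c \in (-\sqrt{2}, \sqrt{2})$ gives $c^2 < 2 < 30/7$, both factors are negative and the product is positive. This algebraic check — ensuring the Young inequality parameter can be chosen compatibly for the whole range $c \in (-\sqrt{2}, \sqrt{2})$ — is the only delicate step of the proof.
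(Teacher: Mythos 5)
Your proof is correct and follows essentially the same route as the paper: both absorb the cross term $-4c\int w_1\,\partial_x w_2$ by a weighted Young/completing-the-square step and then verify that the remaining diagonal coefficients all exceed $\beta^2/16$ for $c^2<2$ (your polynomial inequality $49c^4-308c^2+420\geq 0$ checks out). The paper's choice $b=\sqrt{1+7c^2/2}$ is just a particular, non-optimized value of your parameter $\lambda$, so the two arguments differ only in elementary algebra.
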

\begin{proof}
Let $b=\sqrt{1+7c^2/2}$.
We rewrite
\begin{equation*}
\cQ = \frac{\beta^2}{2b} \int w_1^2 + \frac{\beta^2}{2} \int (\partial_x w_2)^2
+ 3\int (\partial_x^2 w_2)^2
 + \int \left( \frac{2c}{b} w_1 - b \partial_x w_2\right)^2
\end{equation*}
which is sufficient to prove the result.
\end{proof}
\section{Proof of the Liouville theorem}

\begin{lemma}\label{le:09}
It holds
\[
|\cR| \lesssim \cN^3.
\]
\end{lemma}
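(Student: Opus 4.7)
The plan is to decompose $\cR = \cR_1+\cR_2+\cR_3$ where
\[
\cR_1 = \int x(\partial_x w_2) F_2, \quad \cR_2 = \int x(\partial_x F_1)(w_1-2c\partial_x w_2), \quad \cR_3 = -\dot c \int x (\partial_x w_2)^2,
\]
and bound each by $\cN^3$. The overarching principle is that Lemma~\ref{le:06} supplies factors of $\cN^2$ from $F_{2,1}$, $F_{2,2}$, $\partial_x F_1$ and $\dot c$, while the remaining factors involving $w_1,\partial_x w_2,\partial_x^2 w_2$ contribute the missing factor of $\cN$ through their $L^2$ control built into the definition of $\cN$, with the growing weight $|x|$ tamed by the pointwise exponential decay of $w_1,\partial_x w_2, \partial_x^2 w_2$ from Lemmas~\ref{le:04}--\ref{le:05}.

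The simplest case is $\cR_3$: from $|\dot c|\lesssim \cN^2$ it suffices to show $\int |x|(\partial_x w_2)^2\lesssim \cN$. Using the pointwise bound $|\partial_x w_2|\lesssim \rho^\gamma$ on one factor and Cauchy-Schwarz on the rest,
\[
\int |x|(\partial_x w_2)^2 \;\leq\; C\int |x|\rho^\gamma|\partial_x w_2|\;\leq\; C\,\|\,|x|\rho^\gamma\|_{L^2}\,\|\partial_x w_2\|_{L^2}\;\lesssim\; \cN,
\]
which yields $|\cR_3|\lesssim \cN^3$.

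For $\cR_1$, I would write $F_2=F_{2,1}+\partial_x F_{2,2}$ and integrate by parts on the second piece,
\[
\int x(\partial_x w_2)\partial_x F_{2,2}=-\int (\partial_x w_2)F_{2,2}-\int x(\partial_x^2 w_2)F_{2,2},
\]
then apply Cauchy-Schwarz with the weight $\rho^{\gamma/4}$ to each of the three resulting integrals, pairing $\|\rho^{\gamma/4}F_{2,j}\|\lesssim \cN^2$ against weighted $L^2$-norms of $\partial_x w_2$ and $\partial_x^2 w_2$. For $\cR_2$, it is essential \emph{not} to integrate by parts, since $F_1$ contains the non-decaying piece $\Theta_{1,2}=-\tfrac{1}{\sqrt 2}\dot\theta$ (only $\partial_x F_1$ is controlled, as noted in the proof of Lemma~\ref{le:06}); instead, I would apply Cauchy-Schwarz directly,
\[
|\cR_2|\;\leq\; \|x\rho^{-\gamma/4}(w_1-2c\partial_x w_2)\|_{L^2}\cdot \|\rho^{\gamma/4}\partial_x F_1\|_{L^2},
\]
and again the second factor is $\lesssim \cN^2$ by Lemma~\ref{le:06}.

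The hard part is proving the weighted estimates
\[
\|x\rho^{-\gamma/4}\partial_x w_2\|_{L^2},\;\|x\rho^{-\gamma/4}\partial_x^2 w_2\|_{L^2},\;\|x\rho^{-\gamma/4}(w_1-2c\partial_x w_2)\|_{L^2}\;\lesssim\;\cN,
\]
since $x\rho^{-\gamma/4}$ grows exponentially. The idea is to use the pointwise bounds $|\partial_x w_2|,|\partial_x^2 w_2|,|w_1|\lesssim \rho^\gamma$ from Lemmas~\ref{le:04}--\ref{le:05} to overwhelm the growth, then Cauchy-Schwarz to extract the $L^2$-smallness factor of $\cN$, schematically via
\[
\int x^2\rho^{-\gamma/2}(\partial_x w_2)^2\;\leq\; C\int x^2\rho^{\gamma/2}|\partial_x w_2|\;\leq\; C\,\|\,x^2\rho^{\gamma/2}\|_{L^2}\,\|\partial_x w_2\|_{L^2},
\]
and similarly for $\partial_x^2 w_2$ and $w_1$. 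Collecting the three estimates, $|\cR|\leq |\cR_1|+|\cR_2|+|\cR_3|\lesssim \cN^3$.
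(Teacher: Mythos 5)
Your proposal follows essentially the same route as the paper: the identical decomposition $\cR=\cR_1+\cR_2+\cR_3$, integration by parts on the $\partial_x F_{2,2}$ piece of $\cR_1$, direct Cauchy--Schwarz on $\cR_2$ against $\|\rho^{\gamma/4}\partial_x F_1\|$ (correctly avoiding an integration by parts that would expose the uncontrolled constant in $F_1$), and the same mechanism of trading the weight $x\rho^{-\gamma/4}$ for the pointwise decay $\rho^\gamma$ of $w_1,\partial_x w_2,\partial_x^2 w_2$ before extracting an $L^2$ factor. One bookkeeping slip: the chain you write, e.g.
\[
\int x^2\rho^{-\gamma/2}(\partial_x w_2)^2\lesssim \int x^2\rho^{\gamma/2}|\partial_x w_2|\lesssim \|x^2\rho^{\gamma/2}\|\,\|\partial_x w_2\|\lesssim \cN,
\]
bounds the \emph{square} of the weighted norm by $\cN$, so it gives $\|x\rho^{-\gamma/4}\partial_x w_2\|\lesssim \cN^{1/2}$, not $\lesssim\cN$ as you claim; this loss of half a power is unavoidable with this method (it is exactly the loss alluded to in the paper's remark). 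Consequently your argument yields $|\cR_1|+|\cR_2|\lesssim \cN^{5/2}$ rather than $\cN^3$ --- which is in fact what the paper's own computation delivers as well, and which is all that is needed downstream, since the only requirement is $|\cR|=o(\cN^2)$ so that $\dot\cI\gtrsim\cN^2$.
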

\begin{remark}
Formally, $\cR$ contains quadratic terms and should satisfy an estimate of the form
$|\cR|\lesssim \cN^4$. However, some loss is necessary to recover space decay by
Lemmas~\ref{le:04} and~\ref{le:05}.
\end{remark}
\begin{proof}
We set $\cR=\cR_1+\cR_2+\cR_3$, where
\[
\cR_1=\int x(\partial_x w_2) F_2,\quad
\cR_2=\int x (\partial_x F_1) (w_1-2 c \partial_x w_2),\quad
\cR_3=-\dot c \int x(\partial_x w_2)^2.
\] 
\emph{Estimate of $\cR_1$.}
We use the decomposition $F_2=F_{2,1}+\partial_x F_{2,2}$ from Lemma~\ref{le:06} to write
\[
\cR_1 = \int x(\partial_x w_2)F_{2,1}-\int (\partial_x w_2)F_{2,2}-\int x(\partial_x^2 w_2)F_{2,2}.
\]
Thus, by the Cauchy-Schwarz inequality
\[
|\cR_1|\lesssim
\|\rho^\frac\gamma4 F_{2,1}\| \|x\rho^{-\frac\gamma4} \partial_xw_2\|
+\|\rho^\frac\gamma4 F_{2,2}\| \left(\|\rho^{-\frac\gamma4}\partial_xw_2\|
+\|x\rho^{-\frac\gamma4} \partial_x^2w_2\|\right)
\]
By the Cauchy-Schwarz inequality and then Lemma~\ref{le:04}
\[
\left|\int x^2\rho^{-\frac\gamma2}(\partial_x w_2)^2\right|
\lesssim \cN \left(\int x^4\rho^{-\gamma} (\partial_x w_2)^2\right)^\frac 12
\lesssim \cN \left(\int x^2 \rho^{\gamma}\right)^\frac 12 
\lesssim \cN 
\]
and similarly
$\|\rho^{-\frac\gamma4}\partial_xw_2\|
+\|x\rho^{-\frac\gamma4} \partial_x^2w_2\|\lesssim \cN^\frac 12$.
Thus, the estimate for $\cR_1$ follows from Lemma~\ref{le:06}.

\emph{Estimate of $\cR_2$.} We have
\[
|\cR_2|\lesssim \|\rho^\frac\gamma4\partial_x F_1\|
\big( \|\rho^{-\frac\gamma4}xw_1\|+\|\rho^{-\frac\gamma4}x\partial_xw_2\|\big).
\]
As before, using Lemmas~\ref{le:04},~\ref{le:05},
$\|\rho^{-\frac\gamma4}xw_1\|+\|\rho^{-\frac\gamma4}x\partial_xw_2\|\lesssim \cN^\frac 12$.
Thus, Lemma~\ref{le:06} implies the result for $\cR_2$.

\emph{Estimate of $\cR_3$.}
By Lemma~\ref{le:04},
$\int |x|(\partial_x w_2)^2
\lesssim \cN$
and the estimate of $\cR_3$ follows from Lemma~\ref{le:06}.
\end{proof}
From Lemmas~\ref{le:07}, \ref{le:08} and~\ref{le:09},
for $\alpha_0$ sufficiently small, which implies that $\cN$ is small,
it holds
$\dot\cI \gtrsim \cN^2$.
Thus, by integration in $t\in (-\infty,+\infty)$, and the bound on $\cI$, we have $\int_{-\infty}^{+\infty} \cN^2 <+\infty$.
Thus, there exists a sequence $t_n\to+\infty$, such that
$\lim_{n\to+\infty} \cN(t_n)=0$.
From Lemmas~\ref{le:04} and~\ref{le:05}, we obtain
$\lim_{n\to+\infty} \|\ep(t_n)\|_\cH=0$.
Using also the pointwise decay estimate of $\eta$ in Lemma~\ref{le:01},
we have $\lim_{n\to+\infty} \|\eta(t_n)-Q_{c(t_n)}\|=0$.
This implies that 
\[
\lim_{n\to+\infty}d(\psi(t_n),e^{\ii \theta(t_n)} U_{c(t_n)}(x-a(t_n)))=0.
\]
By the stability statement, we obtain
that $\psi$ is exactly a soliton.
 
\end{document}